\definecolor{db}{RGB}{0, 0, 130}
\definecolor{rp}{rgb}{0.25, 0, 0.75}
\definecolor{dg}{rgb}{0, 0.6, 0}
\newtheorem{theorem}{Theorem}[section]
\newtheorem{definition}{Definition}[section]
\newtheorem{example}[definition]{Example}
\newtheorem{assumption}[theorem]{Assumption}
\newtheorem{lemma}[definition]{Lemma}
\newtheorem{remark}[definition]{Remark}
\def\1{\mathbf{1}}
\def\R{\mathbb{R}}
\def\E{\mathbb{E}}
\def\L{\mathbb{L}}
\def\F{\mathbb{F}}
\def\P{\mathbb{P}}
\def\G{\mathbb{G}}
\def\Bc{\mathcal{B}}
\def\Cc{\mathcal{C}}
\def\Fc{\mathcal{F}}
\def\Pb{\widehat{\P}}
\def\Gc{\mathcal{G}}
\def\Lc{\mathcal{L}}
\def\Pc{\mathcal{P}}
\def\Uc{\mathcal{U}}
\def\Kc{\mathcal{K}}
\def\xb{\mathbf{x}}
\def\x{\times}
\def\Om{\Omega}
\def\om{\omega}
\def\eps{\varepsilon}
\def\Omb{\overline \Om}
\def\Fcb{\overline \Fc}
\def\Pcb{\overline \Pc}
\def\Pb{\overline \P}
\def\Fb{\overline \F}
\def\Ph{\widehat{\P}}
\def\Wc{\mathcal{W}}
\def\Mb{\overline M}
\def\Jb{\overline J}
\def\Ur{{\rm U}}
\def\Uc{{\cal U}}
\def\ur{{\rm u}}
\title{A $C^1$-It\^o's formula for flows of semimartingale distributions
	\thanks{We are grateful to Pierre Cardaliaguet and Chenchen Mou for helpful discussions.}}
\author{
Bruno Bouchard
\footnote{CEREMADE, Universit\'e Paris Dauphine - PSL, CNRS.  bouchard@ceremade.dauphine.fr. }
\and
Xiaolu Tan
\footnote{Department of Mathematics, The Chinese University of Hong Kong. xiaolu.tan@cuhk.edu.hk. Research supported by CUHK startup grant, and by Hong Kong RGC General Research Fund (Projects 14302921 and 14302622).}
\and
Jixin Wang
\footnote{Department of Mathematics, The Chinese University of Hong Kong. jxwang@math.cuhk.edu.hk.}
}
\date{\today}
\begin{document}

\maketitle

\begin{abstract}
	We provide an It\^o's formula for $C^1$-functionals of flows of conditional marginal distributions of  continuous semimartingales.
	This is based on the notion of  weak Dirichlet process, and extends the $C^1$-It\^o's formula in Gozzi and Russo (2006) to this context.
	As the first application, we study a class of McKean-Vlasov optimal control problems,
	and establish a verification theorem which only requires $C^1$-regularity of its value function, which is equivalently the (viscosity) solution of the associated HJB master equation.
	It goes together with a novel duality result.
\end{abstract}

\section{Introduction}

	In its classical formulation, It\^o's formula  provides a canonical decomposition for $C^2$-transformations of  semimartingales.
	Since it was introduced, various variations have been proposed.
	In particular, the $C^1$-It\^o's formula, that was developed in the series of works \cite{Russo and Vallois1, Russo and Vallois2, Russo and Vallois3, Gozzi and Russo} using the notion of weak Dirichlet process and the stochastic calculus via regularization approach, only requires the transformation to be $C^1$.
	In this theory, a $C^1$-functional of a weak Dirichlet process is again a weak Dirichlet process, which can be (uniquely) decomposed as the sum of a martingale  and an orthogonal process.

	\vspace{0.5em}
	
	Recently, motivated by the study of mean-field problems, involving McKean-Vlasov processes, or Mean-Field Games (MFG),
	an It\^o's formula for flows of semimartingale distributions has been introduced, see e.g. \cite{BuckdahnLiPeng,Lions, Carmona and Delarue book} or the recent paper \cite{Fadel Touzi} and the references therein.
	It applies to transformations of measure-valued processes, obtained as the flows of (conditional) marginal distributions of semimartingales,
	and provides a decomposition of $C^2$-functionals of such measure-valued processes.
	In particular, it can be used to deduce the master equation for  MFGs, or the Hamilton-Jacobi-Bellman (HJB) equation of  McKean-Vlasov control problems.
	However, in practical  situations of application, it is usually not  easy to check the required $C^2$-differentiability of the value function defined on the space of probability measures.

	\vspace{0.5em}

	In this paper, we provide a $C^1$-It\^o's formula for flows of semimartingale distributions,
	by using the notion of   weak Dirichlet process as in Gozzi and Russo \cite{Gozzi and Russo}.
	This requires less regularity   on the value function and turns out to be   enough in many  applications.
	More precisely, on a filtered probability space $(\Om, \Fc, \F =(\Fc_t)_{t \ge 0}, \P)$, let $X$ be a $\R^d$-valued continuous semimartingale with decomposition
	$$
		X_t = X_0 + A_t + M_t + \int_0^t \sigma^{\circ}_s d M^{\circ}_s,
	$$
	where $A$ is a continuous finite variation process, $\sigma^{\circ}$ is progressively measurable, and both $M$ and $M^{\circ}$ are continuous martingales.
	Let $\Gc := \sigma(M^{\circ}_t, t \ge 0)$ denote the sub-$\sigma$-field generated by $M^{\circ}$, which is usually referred to as the common noise $\sigma$-field in the mean-field literature.
   Then, one defines a process $m = (m_t)_{t \ge 0}$, taking values in the space $\Pc(\R^d)$ of probability measures on $\R^d$, by
	$$
		m_t := \Lc(X_t \big| \Gc),
		~~t \ge 0.
	$$
	Besides, we consider a continuous weak Dirichlet process $Y$ with (unique) decomposition
	\begin{equation*}
		Y_t=Y_0+ A^Y_t + M^Y_t,
	\end{equation*}
	where $M^Y$ is the martingale part  {of $Y$}, and $A^Y$ is its orthogonal part (see Section \ref{sec:Ito} for a precise definition).
	Under some (essentially related to integrability) technical conditions, we prove that,
	for a continuous function $F: (t,y,m) \in \R_+ \x \R^d \x \Pc (\R^d) \longrightarrow F(t,y,m) \in \R$
	with continuous partial derivative $D_y F$  {in} $y$, and continuous intrinsic derivative $D_m F$ (see Section \ref{sec:Ito} for a precise definition),
	one has
		\begin{align*}
			F(t, Y_t, m_t)
			=
			F(0,Y_0,m_0)
			& +
			\int_{0}^{t} D_y F(s,Y_s,m_s)  ~d M^Y_s  \nonumber \\
			&+
			\int_{0}^{t} \E \left[D_m F(s,y,m_s,X_s) \sigma_s^{\circ} \Big| M^{\circ}\right]_{y=Y_s}
		dM_s^{\circ}
			+
			\Gamma_t,
		\end{align*}
	where $\Gamma$ is an orthogonal process.
	When $F$ is a $C^2$-functional, $\Gamma$ can be explicitly expressed  {in terms of the} first order time derivative $\partial_t F$, together with the second order derivatives $D^2_y F$, $D_x D_m F$ and $D^2_m F$ (see e.g. \cite[Section 6]{Carmona and Delarue}).
	In particular, we extend the It\^o's formula for $C^1$-transformation of weak Dirichlet processes of \cite{Gozzi and Russo} to our context.
\\	

	Importantly, this formula allows   one to identify the martingale part of the process $F(\cdot,Y, m)$,
	which is   enough in many practical situations of   application.
	Let us for instance refer to \cite{Gozzi and Russo} for an application {in optimal control},
	and to \cite{Bouchard Tan}  for some applications in mathematical finance.
\vspace{2mm}

The second contribution of this paper is to provide a new type of application in the form of a verification argument.
Namely, we consider a McKean-Vlasov optimal control problem of the form:
	$$
		\sup_{\nu \in \Uc}
		~
		\E \Big[ \int_0^T L(t, \rho^{\nu}_t, \nu_t) dt + g(\rho^{\nu}_T) \Big],
	$$
	where, given two independent Brownian motions $W$ and $W^{\circ}$,
	$X^{\nu}$ is defined by the controlled McKean-Vlasov SDE:
	$$
		    X_t^{\nu}
		    =
		    X_0
		    +
		    \int_0^t  \sigma(s, X^{\nu}_s, \rho^{\nu}_s) dW_s
		    +
		    \int_0^t  \sigma_0(s, X^{\nu}_s, \rho^{\nu}_s) \big( dW_s^{\circ} + \nu_s ds \big),
		    ~~\mbox{with}~~
		    \rho^{\nu}_t := \Lc(X^{\nu}_t | W^{\circ}),
	$$
	and where an admissible control process $\nu \in \Uc$ is a $\F^{W^{\circ}}$-progressively measurable process taking value in a compact set ${\Ur} \subset \R^d$.
	In  {the} above, $W^{\circ}$ is the so-called common noise, and $\F^{W^{\circ}}$ denotes the filtration generated by $W^{\circ}$.
	Notice that one only controls the drift process, and the control depends only on the common noise $W^{\circ}$.
	For this McKean-Vlasov control problem, the value function can be written as $V(t, m)$,
	where $V$ is the unique (viscosity) solution of the master HJB equation (see e.g. Pham and Wei \cite{Pham and Wei}):
	$$
		\partial_t V(t, m) + \L[V]  + H \big( t,m, D_m V(t,m,\cdot) \big) = 0,
	$$
	where $\L[V]$ is a linear operator  {involving   $D_x D_m V$} and $D^2_m V$ (see Section \ref{sec:Verification} for an explicit expression), and $H$ is the Hamiltonian given by
	$$
		H(t, m, D_m V(t, m, \cdot)) := \sup_{{u \in \Ur}} \Big( L(t, m, {u}) + {{u}  m} \big( \sigma_0(t, \cdot, m) D_mV(t, m, \cdot) \big) \Big).
	$$
	The classical verification theorem states that, given a smooth solution to the HJB equation (or equivalently the value function),
	the optimizer in the definition of $H$ provides a feedback optimal control.
	It relies on   It\^o's formula,   assuming that $V \in C^{1,2}$ in the sense that $V$, $\partial_t V$, $D_m V$, $D_x D_m V$ and $D^2_m V$ are all  well-defined and continuous (see e.g. \cite{Guo Pham Wei} and \cite{Talbi Touzi Zhang} for some closely related problems).
	At the same time, for the above class of optimal control problems,
	the definition of the Hamiltonian $H$ as well as the associated optimizer only involve  the first order derivative $D_m V$.
	It is then natural to ask whether it is enough to only require $C^{1}$-regularity on $V$  {(in the sense that $V$ and $D_m V$ are both continuous)}.
\\
By using our $C^1$-It\^o's formula, we actually  establish a verification theorem which only assumes that $V$ is ${C^{1}}$. 	To the best of our knowledge,  {this approach is new even for} classical optimal control problems. The proof goes together with the proof of a dual formulation which is of own interest.

	\vspace{0.5em}

	The rest of this paper is organized as follows. The $C^1$-It\^o's formula for flows of semimartingale distributions is proved in Section \ref{sec:Ito}.
	 Section \ref{sec:Verification} is dedicated to our verification and duality arguments for a class of McKean-Vlasov optimal control problems.

\section{A \texorpdfstring{$C^1$}{}-It\^o's formula for flows of semimartingale distributions}
\label{sec:Ito}

	Throughout the section, we fix a complete probability space $(\Om, \Fc, \P)$, equipped with a filtration $\F = (\Fc_t)_{t \ge 0}$ satisfying the usual conditions.
	We assume that $\Fc$ is countably determined.

\subsection{Preliminaries}

	Let $\Pc(\R^d)$ denote the space of all (Borel) probability measures on $\R^d$,
	and $\Pc_2(\R^d)$ denote the set of all probability measures on $\R^d$ with finite second moment,
	i.e.
	$$
		\Pc_2(\R^d) = \Big\{\mu \in \Pc(\R^d) : \int_{\R^d}|x|^2 \mu(d x) <\infty \Big\}.
	$$
	The space $\Pc_2(\R^d)$ is equipped with the Wasserstein distance
	$$
		\Wc_2(\mu_1, \mu_2) := \left(\inf_{\gamma \in \Gamma(\mu_1,\mu_2)} \int_{\R^d \x \R^d} |x-y|^2 d\gamma(x,y)\right)^{\frac{1}{2}},
	$$
	where $\Gamma(\mu_1,\mu_2)$ is the set of all couplings of $\mu_1$ and $\mu_2$, i.e.~joint probability measures on $\R^d \x \R^d$ whose marginals are $\mu_1$ and $\mu_2$, respectively.

	\begin{definition}\cite[Definition 5.43]{Carmona and Delarue book}\label{functional linear derivative}
		A function $F: \Pc_2(\R^d) \longrightarrow \R$ is said to have a linear functional derivative if there exists a function
		$$
			\frac{\delta F}{\delta m}: \Pc_2(\R^d) \times \R^d \ni(m, x) \longmapsto \frac{\delta F}{\delta m}(m)(x) \in \R,
		$$
		 {that is} continuous for the product topology,    {such that,} for any bounded subset $\Bc \subset \Pc_2(\R^d)$, the function $\R^d \ni x \longmapsto$ $[\delta F / \delta m](m)(x)$ is at most of quadratic  {growth  uniformly} in  {$m \in \Bc$}, and such that, for all $m$ and $m^{\prime}$ in $\Pc_2(\R^d)$:
		\begin{equation} \label{eq:def_dmF}
			F(m^{\prime})-F(m)=\int_0^1 \int_{\R^d} \frac{\delta F}{\delta m}\left(t m^{\prime}+(1-t) m\right)(x) \left[m^{\prime}-m\right](d x) d t .
		\end{equation}
		Assume further that, for any $m \in \Pc_2(\R^d)$, the function $\R^d \ni x \longmapsto {\frac{\delta F}{ \delta m}}(m)(x)$ is differentiable. Then, one defines the intrinsic derivative
		\begin{equation} \label{eq:def_DmF}
			D_m F(m, x):= {D_x} \delta_m F (m, x),
			~~\mbox{with}~~
			\delta_m F(m, x) := \frac{\delta F}{\delta m}(m)(x),
		\end{equation}
	in which $D_x$ is the gradient operator with respect to the $x$-variable.
	\end{definition}

	In our paper, we will stay in the setting where $D_m F(m,x)$ is jointly continuous in $(m,x)$ and is at most of linear growth in $x$, uniformly in $m \in \Bc$, for any bounded subset $\Bc \subset \Pc_2(\R^d)$.
	Then, for any $m \in \Pc_2(\R^d)$, the function $\R^d \ni x \longmapsto D_m F(m,x)$ is uniquely defined $m$-almost everywhere on $\R^d$.

	\vspace{0.5em}

	We shall make use of the notion of weak Dirichlet process and stochastic calculus by regularization,
	for which we now define the  notions of quadratic variation and of orthogonal process (see e.g. \cite[Definition 3.4]{Gozzi and Russo}). Recall that a sequence of stochastic processes
	$\big\{ (X_t^n)_{t\ge 0}, n \ge 1\big\}$   is said to converge to the process $(X_t)_{t \ge 0}$ in the u.c.p topology (uniform convergence on compacts in probability) if,
	for all $\eps > 0$ and $t \ge 0$,
	$$
		\lim_{n \to \infty} \P \Big[ \sup_{s \le t} |X_s^n-X_s|> \varepsilon \Big] = 0.
	$$

	\begin{definition} \label{def:weakDiri}
		$\mathrm{(i)}$ Given two càdlàg processes $X$ and $Y$, the quadratic covariation $[X,Y]$ is defined by
		$$
			[X,Y]_s = \lim_{\varepsilon \to 0}
			\frac{1}{\varepsilon}
			\int_0^s (X_{r+\varepsilon} - X_r)(Y_{r+\varepsilon} - Y_r)dr, ~s \ge 0,
		$$
		if the limit exists in the sense of u.c.p.

		\vspace{0.5em}
		
		\noindent $\mathrm{(ii)}$ Let $A$ be a $\F$-adapted c\`adl\`ag process, we say  {that} $A$ is an orthogonal process if $\left[A, N\right] = 0$ for every continuous $\F$-local martingale $N$.
		
		\vspace{0.5em}
		
		\noindent $\mathrm{(iii)}$ Let $X$ be a $\F$-adapted c\`adl\`ag process, it is called a weak Dirichlet process if it has the decomposition
		$$
			X_t = X_0 + A_t + M_t, ~~t \ge 0,
		$$
		where $M$ is a local martingale, and $A$ is an orthogonal process w.r.t. the filtration $\F$.
	\end{definition}

	\begin{remark}
		$\mathrm{(i)}$
		When $X$ and $Y$ are c\`adl\`ag semimartingales, $[X,Y]$ coincides with the usual bracket (see e.g. \cite[Proposition 1.1]{Russo and Vallois2}).
		
		\vspace{0.5em}
		
		\noindent $\mathrm{(ii)}$ In the definition of the orthogonal process, it is equivalent to consider all bounded continuous martingales $N$ in place of all continuous local martingales.
		
		\vspace{0.5em}
		
		\noindent $\mathrm{(iii)}$
		For a continuous weak Dirichlet process, its decomposition as the sum of an orthogonal process and a local martingale is unique.
		
		\vspace{0.5em}
		
		\noindent $\mathrm{(iv)}$ We will consider later a sub-filtration $\G$ of $\F$. Nevertheless, throughout the paper, the notion of   orthogonal   and   weak Dirichlet process are all w.r.t.~the filtration $\F$.
	\end{remark}

\subsection{Main Result}

	 From now on, we fix a continuous semimartingale $\left(X_t\right)_{t \ge 0}$ on the filtered space $(\Om, \Fc, \F, \P)$ with decomposition
	\begin{equation} \label{eq:structure_X}
		X_t=X_0+ A_t + M^X_t,
		~~\mbox{with}~~
		M^X_t = M_t +  \int_0^t \sigma^{\circ}_s d M^{\circ}_s, ~t \ge 0,
	\end{equation}
	where $(A_t)_{t \ge 0}$ is   continuous  {with} finite variation,
	$(M_t)_{t \ge 0}$ and $(M_t^{\circ})_{t \ge 0}$ are continuous martingales,
	with $A_0 = M_0 = M_0^{\circ} =0$.
	Let us define $\G = (\Gc_t)_{t \ge 0}$ as the (raw) filtration generated by $M^{\circ}$, i.e.
	$$
		\Gc_t := \sigma(M^{\circ}_s, ~ 0 \le s \le t), ~~t \ge 0,
	$$
	and define
	$$
		\Gc := \sigma(M^{\circ}_s, ~s \ge 0).
	$$
	\begin{assumption}\label{assum:main}
		$\mathrm{(i)}$ The process $\sigma^{\circ}$ is $\F$-progressively measurable,
		and there exists an increasing sequence of stopping times $(\tau_n)_{n \ge 1}$ w.r.t. $\G$ such that
		$\tau_n \longrightarrow \infty$, a.s. as $n \longrightarrow \infty$, and
		\begin{equation} \label{eq:SquareIntX}
			\E \Big[ [ M ]_{\tau_n \wedge t} + \big|A \big|_{\tau_n \wedge t}^2  +
				\int_0^{\tau_n \wedge t}  \big|\sigma^{\circ}_{s} \big|^2 d [M^{\circ} ]_s
			\Big] <+\infty, ~\mbox{for all}~t \ge 0~\mbox{and}~n \ge 1,
		\end{equation}
		where $\left(\left|A\right|_t\right)_{t \ge 0}$ denotes the total variation of $A$.
		
		\vspace{0.5em}

		\noindent $\mathrm{(ii)}$ The martingale $M$ is orthogonal to $N$ (i.e. $[M, N] = 0$), for all (c\`adl\`ag) $\G$-martingale $N$.

		\vspace{0.5em}
		
		\noindent $\mathrm{(iii)}$ $(H)$-hypothesis condition:
		$$
			\E \big[\mathbf{1}_D \big| \Gc_t \big] = \E \big[\mathbf{1}_D \big| \Gc \big], ~\mbox{a.s., for all} ~ D \in \Fc_t, ~t \ge 0.
		$$
	\end{assumption}
	
	 {We next introduce the}  $\Pc(\R^d)$-valued process $m = (m_t)_{t \ge 0}$  {associated to the $\Gc$-conditional law of $X$:}
	$$
		m_t := \Lc \left(X_t ~|\Gc_t \right) = \Lc( X_t ~| \Gc),
		~~t \ge 0.
	$$
	Assumption \ref{assum:main} ensures that $m = (m_t)_{t \ge 0}$ is continuous under $\Wc_2$, as shown in the following lemma.
	\begin{lemma} \label{lemm:m_continuous}
		Let Assumption \ref{assum:main} hold true,
		and $(\tau_n)_{n \ge 1}$ be the sequence of $\G$-stopping times therein,
		then for all bounded $\Fc_t$-measurable random variable $\xi$,
		$$
			\E \big[ \xi \1_{\{t \le  \tau_n \}}  \big| \Gc_{\tau_n \wedge t} \big]
			~=~
			\E \big[ \xi \1_{\{t \le  \tau_n \}}  \big| \Gc_{t} \big]
			~=~
			\E \big[ \xi \1_{\{t \le  \tau_n \}}  \big| \Gc \big],
			~\mbox{a.s.}
		$$
		Consequently,
		$$
			m_t = \Lc(X_{\tau_n \wedge t}  | \Gc_t) = \Lc(X_{\tau_n \wedge t}  | \Gc) ,~\mbox{a.s., on}~\{t \le \tau_n \},
			~\mbox{for all}~ t \ge 0~\mbox{and}~n \ge 1.
		$$
		In particular, one can choose $m = (m_t)_{t \ge 0}$ to be a continuous $\Pc_2(\R^d)$-valued process, under the Wasserstein distance $\Wc_2$ on $\Pc_2(\R^d)$.
	\end{lemma}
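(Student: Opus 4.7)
The plan is to verify first the two displayed conditional-expectation identities, then the two representations of $m_t$ on $\{t\le\tau_n\}$, and finally to build a $\Wc_2$-continuous version of $m$ by patching regular conditional laws of the stopped processes $X^{\tau_n}$. For the first equality, I exploit two standard facts about stopping-time $\sigma$-fields: $\{t\le\tau_n\}=\{\tau_n\wedge t=t\}$ lies in both $\Gc_{\tau_n\wedge t}$ and $\Gc_t$, and the traces of these two sub-$\sigma$-fields on this event coincide. Setting $Y:=\xi\1_{\{t\le\tau_n\}}$, one has $\E[Y|\Gc_t]=\1_{\{t\le\tau_n\}}\E[\xi|\Gc_t]$, a $\Gc_t$-measurable variable vanishing on $\{t>\tau_n\}$, so it admits a $\Gc_{\tau_n\wedge t}$-measurable version; the tower property then yields $\E[Y|\Gc_{\tau_n\wedge t}]=\E[Y|\Gc_t]$ a.s.

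For the second equality, $Y$ is bounded and $\Fc_t$-measurable since $\tau_n$, being a $\G$-stopping time, is in particular an $\F$-stopping time; applying Assumption~\ref{assum:main}(iii) extended from indicators to bounded measurable variables by a monotone class argument gives $\E[Y|\Gc_t]=\E[Y|\Gc]$ a.s. The identification of $m_t$ on $\{t\le\tau_n\}$ then follows by applying these identities with $\xi=\phi(X_t)$ for $\phi\in C_b(\R^d)$, using $X_t=X_{\tau_n\wedge t}$ on the event and pulling the $\Gc$- (resp.\ $\Gc_t$-) measurable indicator $\1_{\{t\le\tau_n\}}$ out of the conditional expectation, so that $\int\phi\,dm_t=\int\phi\,d\Lc(X_{\tau_n\wedge t}|\Gc)=\int\phi\,d\Lc(X_{\tau_n\wedge t}|\Gc_t)$ a.s.~on $\{t\le\tau_n\}$; separability of $C_b(\R^d)$ then gives the claim simultaneously for all test functions.

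The substantive part is the continuous modification. Fix $n$, set $X^{(n)}_s:=X_{\tau_n\wedge s}$, and combine Burkholder--Davis--Gundy with Assumption~\ref{assum:main}(i) to obtain $\E[\sup_{s\le T}|X^{(n)}_s|^2]<\infty$ for every $T$. Since $\Fc$ is countably determined, a regular conditional distribution $\pi^{(n)}(\omega,\cdot)$ of the path $X^{(n)}_\cdot$ given $\Gc$ exists as a random probability on $C(\R_+,\R^d)$, supported a.s.~on continuous paths. Its time-$s$ marginal is a version of $m^{(n)}_s:=\Lc(X^{(n)}_s|\Gc)$, and the preceding step gives $m^{(n)}_t=m_t$ a.s.~on $\{t\le\tau_n\}$. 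For a.e.~$\omega$, the joint law of $(X^{(n)}_s,X^{(n)}_t)$ under $\pi^{(n)}(\omega,\cdot)$ couples $m^{(n)}_s(\omega)$ and $m^{(n)}_t(\omega)$, hence $\Wc_2^2(m^{(n)}_s(\omega),m^{(n)}_t(\omega))\le\E[|X^{(n)}_s-X^{(n)}_t|^2|\Gc](\omega)$; path-continuity together with the a.s.~finite bound on $\E[\sup_{s\le T}|X^{(n)}_s|^2|\Gc](\omega)$ and dominated convergence then yield $\Wc_2^2(m^{(n)}_s,m^{(n)}_t)\to 0$ as $s\to t$, so $m^{(n)}$ admits a $\Wc_2$-continuous modification. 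Consistency on $\{t\le\tau_n\}\subseteq\{t\le\tau_{n+1}\}$ combined with $\tau_n\to\infty$ a.s.~then patches these into a $\Wc_2$-continuous $\Pc_2(\R^d)$-valued version of $m$. The most delicate point is the passage from pointwise-in-$t$ identification (up to $t$-dependent null sets) to pathwise continuity, which is precisely why constructing the regular conditional distribution of the entire stopped path, rather than working marginal by marginal, is essential.
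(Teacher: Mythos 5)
Your proof is correct and follows essentially the same route as the paper: you establish the conditional-expectation identities via the $(H)$-hypothesis together with the trace-$\sigma$-field observation on $\{t\le\tau_n\}$, and you then obtain $\Wc_2$-continuity from a regular conditional distribution of the (stopped) path together with the $L^2$ bound from Assumption~\ref{assum:main}(i) and dominated convergence, exactly as the paper does via a regular conditional probability $(\P_\omega)$ on $\Omega$. The only small slip is the claim that $C_b(\R^d)$ is separable (it is not in the sup norm); what you actually need, and what holds, is a countable measure-determining subfamily of bounded continuous functions.
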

	\proof
	$\mathrm{(i)}$
	Let us fix $n \ge 1$ and $t \ge 0$.
	Since $\Gc$, $\Gc_t$ and $\Gc_{\tau_n \wedge t}$ are all countably generated,
	let us take respectively a regular conditional probability $(\P_{\om})_{\om \in \Om}$ of $\P$ w.r.t. $\Gc$,
	a regular conditional probability $(\P^t_{\om})_{\om \in \Om}$ of $\P$ w.r.t. $\Gc_t$,
	and a regular conditional probability $(\P^{n,t}_{\om})_{\om \in \Om}$ of $\P$ w.r.t. $\Gc_{\tau_n \wedge t}$.
	Under the $(H)$-hypothesis condition, one has, for all bounded $\Fc_t$-measurable random variables $\xi$,
	$$
			\E \big[ \xi \1_{\{t \le  \tau_n \}}  \big| \Gc_{\tau_n \wedge t} \big]
			~=~
			\E \big[ \xi \1_{\{t \le  \tau_n \}}  \big| \Gc_{t} \big]
			~=~
			\E \big[ \xi \1_{\{t \le  \tau_n \}}  \big| \Gc \big],
			~\mbox{a.s.}
	$$
	Since $\Fc$ is assumed to be countably determined and $\Fc_t \subset \Fc$, this implies that
	$$
		\P^{n,t}_{\om} [B]= \P^t_{\om} [B] = \P_{\om} [B], ~\mbox{for all}~ B \in \Fc_t,~\mbox{for}~\P \mbox{-a.e.}~\om \in \{t \le \tau_n \}.
	$$
	As $\{t \le \tau_n \} \in \Gc_{\tau_n \wedge t}$, one can then deduce that
	$$
		m_t (\om) = \P_{\om} \circ X_t^{-1} =\P^t_{\om} \circ X^{-1}_t = \P^{n,t}_{\om} \circ X_t^{-1} = \P^{n,t}_{\om} \circ X_{\tau_n(\om) \wedge t}^{-1},
		~\mbox{for}~\P \mbox{-a.e.}~\om \in \{t \le \tau_n \}.
	$$
	
	\vspace{0.5em}

	\noindent $\mathrm{(ii)}$
	Given the above, one can assume that the integrability condition in \eqref{eq:SquareIntX} holds for $(X, A, M, M^{\circ})$ in place of $(X_{\tau_n \wedge \cdot}, A_{\tau_n \wedge \cdot}, M_{\tau_n \wedge \cdot}, M_{\tau_n \wedge \cdot}^{\circ})$, up to using a standard localizing technique.  Then,
	$$
		\E^{\P} \left[ \sup_{0 \le s \le t} X_s^2 \right] \le 2X_0^2 + 16\E^{\P} \left[ |M^X_t|^2 \right] + 4\E^{\P} \left[ \big|A\big|_t^2 \right] < \infty, ~\mbox{for all}~ t > 0,
	$$
	 {in which we also used} Doob's inequality. This implies that $\E^{\P_\om} \left[ \sup_{0 \le s \le t} X_s^2 \right] < \infty $, for $\P$- {a.e.} $\om$.
	Define $m_s(\om) := \P_{\om}\circ X_s^{-1}$ for $0 \le s \le t$, then
	$$
		\lim_{\varepsilon \to 0} \Wc_2^2(m_s(\om), m_{s+\varepsilon}(\om) )
		\le
		\lim_{\varepsilon \to 0} \E^{\P_{\om}} \left[ (X_{s+\varepsilon} - X_s)^2 \right]
		= 0, ~\mbox{for}~ \P\mbox{- {a.e.}}~ \om.
	$$
	\qed

	\begin{remark}
	{\rm
	$\mathrm{(i)}$ Let $W$ and $W^{\circ}$ be two independent Brownian motions, and $\sigma = (\sigma_s)_{s \ge 0}$ and $\sigma^{\circ} = (\sigma^{\circ}_s)_{s \ge 0}$ be progressively measurable such that
		$$
			\E \Big[ \int_0^t  \big( |\sigma_s|^2 + |\sigma^{\circ}_s|^2 \big) ds \Big] < \infty,~~\mbox{for all}~t \ge 0.
		$$
		Let us define $M$ and $M^{\circ}$ by
		$$
			M_t := \int_0^t \sigma_s dW_s,
			~~
			M^{\circ}_t := W^{\circ}_t,
			~~
			t \ge 0,
		$$
		together with a continuous process $A$ with square integrable total variation.
		Then, Assumption \ref{assum:main} holds true.
	
	\vspace{0.5em}
	
	\noindent $\mathrm{(ii)}$
		Since $\sigma^{\circ}$ is assumed to be adapted to the filtration $\F$ (rather than the sub-filtration $\G$),
		the form of processes $X$ as in \eqref{eq:structure_X} covers the usual McKean-Vlasov SDEs with common noise:
		$$
			X_t = X_0 + \int_0^t b(X_s, m_s) ds + \int_0^t \sigma(X_s, m_s) dW_s + \int_0^t \sigma_0(X_s, m_s) dW^{\circ}_s,
		$$
		for two independent Brownian motions $W$ and $W^{\circ}$ and $m_s := \Lc(X_s | W^{\circ})$.
		In our general formulation, the process $M^{\circ}$ in \eqref{eq:structure_X} plays the role of the common noise.
	}
	\end{remark}
	
	For sake of more generality, we now also consider a $\R^d$-valued continuous weak Dirichlet process $\left(Y_t\right)_{t \ge 0}$ (see Definition \ref{def:weakDiri}), 
	on the same filtered space $(\Om, \Fc, \F, \P)$, with finite quadratic variation (i.e. $[Y,Y]_t < \infty$ for all $t \ge 0$), whose unique decomposition is given by
	\begin{equation}
		Y_t=Y_0+ A^Y_t + M^Y_t, ~t \ge 0,
	\end{equation}
	for an orthogonal process $A^Y$ and a local martingale  $M^Y$, such that $A^Y_0 = M^Y_0 = 0$.
	Notice that there is no condition on the joint law or dynamics of $X$ and $Y$, 
	it is  just required that $X$ is a semimartingale and $Y$ is a weak Dirichlet process w.r.t. the same filtration $\F$.

	\vspace{0.5em}
	
	For a function $F: \R_+ \x \R^d \x \Pc_2(\R^d) \longrightarrow \R$,
	we denote by $D_m F: \R_+ \x \R^d \x \Pc_2(\R^d) \x \R^d \longrightarrow \R^d$ its partial derivative in the sense that $(m, x) \longmapsto D_m F(t,y, m, x)$ is the derivative of $m \longmapsto F(t,y,m)$ as defined in \eqref{eq:def_DmF}.
	Then, we say  {that}
	\begin{equation} \label{eq:def_C11}
		F \in C^{0,1,1}(\R_{+} \x \R^d \x \Pc_2(\R^d))
	\end{equation}
	if $F$ and its partial derivatives $D_y F$ and $D_m F$ are well-defined and all (jointly) continuous.
	
	\vspace{0.5em}
	
	In the following, for a random variable $\xi$, we  {use the notation}
	$$
		\E^{\circ} \big[\xi \big]  {:=} \E \big[\xi \big| \Gc \big]
	$$
	 {whenever the right-hand side is well-defined, and}
	$$
		\E^{\circ} \big[D_m F(s, \cdot, m_s, X_s) \sigma^{\circ}_s \big] (Y_t) := \E \left[D_m F(s,y,m_s,X_s) \sigma_s^{\circ} \Big| \Gc \right]_{y = Y_t},
		~\mbox{for all}~s, t \ge 0.
	$$

	{We shall require the following local boundedness assumption on $D_{m}F$.}
	\begin{assumption}\label{assum:second}
		With the same sequence $(\tau_n)_{n \ge 1}$ of stopping times as in Assumption \ref{assum:main},
		for all $n \ge 1$, $T>0$ and compact subsets $K \subset \R^d$,
		there exists a constant $C > 0$ satisfying
		\begin{align*}
			\E^{\circ} \Big[
			\left(
			D_m F(r, y, m_s^{n, \lambda,t}, X_s^{n, \eta,t})
			\right)^2
			\Big]
			\le C,~\mbox{a.s.},
			~\mbox{for all}~ & r \in [0, 2T], s \in [0, t],~t \in [0, T], \\
           & ~~~~~~~~~~~~~~~~~~~\lambda, \eta \in [0,1], ~y \in K,
		\end{align*}
		where $m^{n, \lambda, t}_s := m_{\tau_n \wedge s} + \lambda ( m_{\tau_n \wedge t} - m_{\tau_n \wedge s})$ and $X^{n, \eta, t}_s := X_{\tau_n \wedge s} + \eta(X_{\tau_n \wedge t} - X_{\tau_n \wedge s})$.
	\end{assumption}

	\begin{remark}
		If there exists a  constant $C> 0$ such that
		$$
			\big| D_m F(r, y, m, x) \big|
			~\le~
			C(1 + |x|), ~\mbox{for all}~(r, y, m, x) \in \R_+ \x \R^d \x \Pc_2(\R^d) \x \R^d,
		$$
		then  one can check that Assumption \ref{assum:second} holds true  {whenever  Assumption \ref{assum:main} does}.
	\end{remark}

We can now state the main result of this section.
	\begin{theorem}\label{thm:ItoC1}
		Let $F \in C^{0,1,1}(\R_{+} \x \R^d \x \Pc_2(\R^d))$,
		and Assumptions \ref{assum:main} and \ref{assum:second} hold true.
		Then,
		\begin{align}
			F(t, Y_t, m_t)
			=
			F(0,Y_0,m_0)
			& +
			\int_{0}^{t} D_y F(s,Y_s,m_s)  ~d M^Y_s  \nonumber \\
			&+
			\int_{0}^{t}  \E^{\circ} \big[D_m F(s, \cdot, m_s, X_s) \sigma^{\circ}_s \big] (Y_s) dM_s^{\circ}
			+
			\Gamma_t,
			~~
			t \ge 0, \label{eq:ItoC1}
		\end{align}
		where $(\Gamma_t)_{0 \le t \le T}$ is an orthogonal process.
	\end{theorem}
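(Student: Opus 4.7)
By the uniqueness of the weak Dirichlet decomposition of a continuous process, it suffices to show that the candidate orthogonal part
\begin{equation*}
Z_t := F(t, Y_t, m_t) - F(0, Y_0, m_0) - \int_0^t D_y F(s, Y_s, m_s)\, dM^Y_s - \int_0^t \E^{\circ}\big[D_m F(s, \cdot, m_s, X_s) \sigma^{\circ}_s\big](Y_s)\, dM^{\circ}_s
\end{equation*}
is indeed orthogonal in the sense of Definition \ref{def:weakDiri}, i.e.\ $[Z, N] = 0$ for every bounded continuous $\F$-martingale $N$. A routine localization with the stopping times $(\tau_n)_{n\ge 1}$ of Assumption \ref{assum:main} lets me assume \eqref{eq:SquareIntX} holds unstopped and that $(Y, m, X)$ and the relevant derivatives are uniformly controlled.

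Writing $[Z, N]_t = \lim_{\eps\downarrow 0} \frac{1}{\eps}\int_0^t (Z_{s+\eps} - Z_s)(N_{s+\eps} - N_s)\, ds$, I split the key increment as $F(s+\eps, Y_{s+\eps}, m_{s+\eps}) - F(s, Y_s, m_s) = \Delta^t_s + \Delta^Y_s + \Delta^m_s$, where
\begin{equation*}
\Delta^t_s := F(s+\eps, Y_s, m_s) - F(s, Y_s, m_s),\quad \Delta^Y_s := F(s+\eps, Y_{s+\eps}, m_{s+\eps}) - F(s+\eps, Y_s, m_{s+\eps}),
\end{equation*}
and $\Delta^m_s := F(s+\eps, Y_s, m_{s+\eps}) - F(s+\eps, Y_s, m_s)$. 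The contribution of $\Delta^t$ vanishes because it is uniformly $o(1)$ on compacts by joint continuity of $F$. For $\Delta^Y$, the fundamental theorem of calculus gives $\Delta^Y_s = \int_0^1 D_y F(s+\eps, Y_s + \lambda(Y_{s+\eps}-Y_s), m_{s+\eps})\, d\lambda\cdot (Y_{s+\eps}-Y_s)$; passing to the u.c.p.\ limit, using joint continuity of $D_y F$ and $[Y,N] = [M^Y,N]$ (since $A^Y$ is orthogonal to $N$), its contribution to $[F(\cdot,Y,m), N]_t$ equals $\int_0^t D_y F(s, Y_s, m_s)\, d[M^Y, N]_s$, which exactly cancels the second integral in $[Z, N]_t$.

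The core step is the $m$-increment. Using Definition \ref{functional linear derivative} together with the $\Gc$-conditional representation $m_s = \Lc(X_s | \Gc)$ from Lemma \ref{lemm:m_continuous},
\begin{equation*}
\Delta^m_s = \int_0^1 \E^{\circ}\big[\delta_m F(s+\eps, Y_s, m_s^{\lambda}, X_{s+\eps}) - \delta_m F(s+\eps, Y_s, m_s^{\lambda}, X_s)\big]\, d\lambda,\quad m_s^{\lambda} := \lambda m_{s+\eps} + (1-\lambda) m_s.
\end{equation*}
For each frozen $(\tau, y, m)$, the map $x \mapsto \delta_m F(\tau, y, m, x)$ is $C^1$ with gradient $D_m F(\tau, y, m, \cdot)$, so the classical Gozzi-Russo $C^1$-It\^o formula applied to the semimartingale $X$ yields
\begin{equation*}
\delta_m F(\tau, y, m, X_{s+\eps}) - \delta_m F(\tau, y, m, X_s) = \int_s^{s+\eps} D_m F(\tau, y, m, X_r)\, dM_r + \int_s^{s+\eps} D_m F(\tau, y, m, X_r)\sigma^{\circ}_r\, dM^{\circ}_r + R^{\tau,y,m}_{s,s+\eps},
\end{equation*}
with $R^{\tau,y,m}$ an $\F$-orthogonal process. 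I then take $\E^{\circ}$: by Assumption \ref{assum:main}(ii) combined with the $(H)$-hypothesis (any bounded $\Gc$-measurable $\xi$ generates a $\Gc$-martingale that, under $(H)$, is also an $\F$-martingale orthogonal to $M$), the first stochastic integral has zero $\Gc$-conditional mean; the second integral commutes with $\E^{\circ}$ via a Fubini-type argument exploiting that $M^{\circ}$ generates $\Gc$; and $\E^{\circ}[R^{\tau,y,m}]$, integrated against $N_{s+\eps}-N_s$, remains $o(\eps)$ in the u.c.p.\ limit. Substituting $\tau = s+\eps$, $y = Y_s$, $m = m_s^{\lambda}$, and combining Assumption \ref{assum:second} with joint continuity of $D_m F$, the $\Wc_2$-continuity of $m$ (Lemma \ref{lemm:m_continuous}), and dominated convergence, the $\Delta^m$ contribution to $[F(\cdot, Y, m), N]_t$ converges to $\int_0^t \E^{\circ}[D_m F(s, \cdot, m_s, X_s)\sigma^{\circ}_s](Y_s)\, d[M^{\circ}, N]_s$, which cancels the last integral of $[Z, N]_t$, giving $[Z, N]_t = 0$. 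The main obstacle I expect is controlling the orthogonal remainder $R^{\tau,y,m}$ uniformly in its parameters and legitimately exchanging $\E^{\circ}$ with the u.c.p.\ regularization; this is exactly where the $L^2$-bound of Assumption \ref{assum:second} and the $(H)$-hypothesis earn their keep.
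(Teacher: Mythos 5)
Your overall strategy matches the paper's: localize, define the candidate residual, and show it is orthogonal by splitting the increment $F(s+\varepsilon,Y_{s+\varepsilon},m_{s+\varepsilon}) - F(s,Y_s,m_s)$ into three parts. Your $\Delta^t,\Delta^Y,\Delta^m$ are precisely the paper's $I^{3,\varepsilon},I^{1,\varepsilon},I^{2,\varepsilon}$, and your treatment of $\Delta^Y$ is correct and essentially Lemma \ref{lemm:cvg_I1}. The other two steps, however, have genuine gaps.

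For the $\Delta^t$ term, ``uniformly $o(1)$ on compacts'' is not a valid justification. The quantity to control is $\frac{1}{\varepsilon}\int_0^t \Delta^t_s(N_{s+\varepsilon}-N_s)\,ds$; the uniform bound on $|\Delta^t_s|$ combined with Cauchy--Schwarz only yields an estimate of order $\sup_s|\Delta^t_s|/\sqrt{\varepsilon}$ times $\sqrt{[N]_t}$, which need not tend to zero under mere joint continuity of $F$. The correct argument (Lemma \ref{lemm:cvg_I2} in the paper) must exploit the martingale structure of $N$: after integration by parts one writes $I^{3,\varepsilon}_t = \int_0^{t+\varepsilon}\eta^\varepsilon_r\,dN_r$, notes $\eta^\varepsilon \to 0$ pointwise while being dominated by a locally bounded adapted process, and invokes dominated convergence for stochastic integrals. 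Your argument as written would let this term survive.

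For the $\Delta^m$ term, applying the Gozzi--Russo $C^1$-It\^o formula to $x\mapsto\delta_m F(\tau,y,m,x)$ at $X$ for each \emph{fixed} $(\tau,y,m)$ does give an orthogonal remainder $R^{\tau,y,m}$, but the orthogonality $[R^{\tau,y,m},N]=0$ is a statement about that fixed deterministic triple. It does not transfer once you substitute the random, $s$- and $\varepsilon$-dependent arguments $(s+\varepsilon, Y_s, m^\lambda_s)$, apply $\E^\circ$, integrate over $\lambda$, and then take the regularization limit against $N$: those operations destroy the structure the $C^1$-It\^o formula gave you. You correctly flag this as ``the main obstacle,'' but it is in fact the entire substance of the lemma and none of the tools you cite resolve it. The paper sidesteps the issue altogether: instead of invoking a $C^1$-It\^o formula recursively, it Taylor-expands $\delta_m F$ in the $x$-variable by the fundamental theorem of calculus, producing $\int_0^1\!\int_0^1 \E^\circ\big[D_m F(s+\varepsilon,\cdot,m^\lambda,X^\eta)(X_{s+\varepsilon}-X_s)\big]\,d\eta\,d\lambda$ with no orthogonal remainder to control. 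It then isolates the continuity modulus $\Delta_m F$ (killed by Cauchy--Schwarz together with the conditional $L^2$ bound of Assumption \ref{assum:second}) and splits $X_{s+\varepsilon}-X_s$ into its $A$, $M$ and $\int\sigma^\circ dM^\circ$ pieces, each handled directly via the conditional isometry of Lemma \ref{lemm:conditional_prpty} and the conditional quadratic-variation limits of Lemma \ref{lemm:QV_exp_lim}. In effect, what you hope to read off from the pointwise orthogonality of $R^{\tau,y,m}$, the paper proves from scratch through conditional $L^2$ estimates; that unconditioned estimate is exactly what is missing from your proposal.
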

	
	{Before to provide the proof of this result, let us make some comments.}
	
	\begin{remark}
	$\mathrm{(i)}$
		The above theorem proves that $(\F(t, Y_t, m_t))_{t \ge 0}$ is a weak Dirichlet process.
		Moreover, it is  {continuous} so that the decomposition in \eqref{eq:ItoC1} is unique.
		
		\vspace{0.5em}
	
		\noindent $\mathrm{(ii)}$ The above result extends the classical $C^1$-It\^o's formula such as in Gozzi and Russo \cite{Gozzi and Russo}.
		Our new feature is that $F$ depends on $(m_t)_{t \ge 0}$, the conditional marginal distribution of the semimartingale $X$.
		The main reason to consider a semimartingale $X$ rather than a general weak Dirichlet process is that, technically, we will use the integral w.r.t. the finite variation part $A$ of $X$ to handle to conditional expectation terms in the proof
		(see in particular the proof of Lemma \ref{lemm:QV_exp_lim}).
		It is  an open question to us whether this formula  still holds  true if the process $A$ in \eqref{eq:structure_X} is only assumed to be orthogonal, so that $X$ is only a weak Dirichlet process.

	\end{remark}
	
	\begin{proof}[Proof of Theorem \ref{thm:ItoC1}.]
	$\mathrm{(i)}$
	Let $(\tau^Y_n)_{n \ge 1}$ be a sequence of $\F$-stopping times  such that $\tau^Y_n \longrightarrow \infty$ as $n \longrightarrow \infty$, and  $Y$ is uniformly bounded on $[0, \tau^Y_n]$, for each $n \ge 1$.
	Then, given the sequence $(\tau_n)_{n \ge 1}$ of localizing stopping times given in Assumption \ref{assum:main}, we define
	$$
		X^n_t := X_{\tau_n \wedge t},
		~~
		Y^n_t := Y_{\tau^Y_n \wedge t},
		~~\mbox{and}~~
		m^n_t := \Lc(X^n_t | \Gc_t),
		~~
		t \ge 0, ~~ n \ge 1.
	$$
	It is enough to prove that \eqref{eq:ItoC1}  holds for $(X^n, Y^n, m^n)$, and then use Lemma \ref{lemm:m_continuous} and let $n \longrightarrow \infty$.
	For simplicity, we also assume that  $F(0,Y_0,m_0) = 0$ and $d=1$.
	Upon replacing the processes $(X, Y, m)$ in \eqref{eq:ItoC1} by the localized process $(X^n, Y^n, m^n)$,
	 one can assume w.l.o.g.~that
\begin{align}\label{eq: new ass for local}
\begin{array}{c} \mbox{$F(0,Y_0,m_0) = 0$, $d=1$,} \\
\mbox{$Y$ is bounded,}\\
\mbox{Assumption \ref{assum:main} and \ref{assum:second} hold with $\tau_{n} \equiv \infty$ a.s.~for all $n\ge 1$}
\end{array}
\end{align} which we do in the following.

	\vspace{0.5em}

	Let us define the process $(\Gamma_t)_{t \ge 0}$ by
	\begin{align*}
		\Gamma_{t} ~:=~ F(t,Y_t,m_t)
		- \int_{0}^{t} D_y F(s,Y_s,m_s) dM^Y_s
		- \int_{0}^{t}  \E^{\circ} \big[D_m F(s, \cdot, m_s, X_s) \sigma^{\circ}_s \big] (Y_s) dM_s^{\circ}.
	\end{align*}
	To prove the theorem, it is enough to show that $\Gamma$ is an orthogonal process, i.e. $\left[\Gamma,N \right] = 0$ for any  {bounded} continuous  {martingale} $N$.
	 {That is, for $N$ given:}
	$$
		I^{\varepsilon}_t := \frac{1}{\varepsilon}
		\int_{0}^{t}  \big[
			F(s+\varepsilon, Y_{s+\varepsilon},m_{s+\varepsilon})
			- F(s,Y_s,m_s)
			\big] (N_{s+\varepsilon} - N_s)ds
			~\longrightarrow~
			I_t, ~t \ge 0, ~\mbox{u.c.p.},
	$$
	as $\eps \longrightarrow 0$, where
	\begin{align*}
		I_{t} :=  \int_{0}^{t} D_y F(s,Y_s,m_s) ~d[M^Y,N ]_s
		+ \int_{0}^{t} \E^{\circ} \Big[D_m F(s,\cdot,m_s,X_s)
		\sigma_s^{\circ} \Big] (Y_s)~ d[M^{\circ},N]_s,
		~t \ge 0.
	\end{align*}
	To this end, we use the decomposition
	$$
		I^{\varepsilon}_t = I^{1,\varepsilon}_t + I^{2,\varepsilon}_t + I^{3, \eps}_t, ~t \ge 0,
	$$
	with
	\begin{align}\label{eq: def I1}
		I_t^{1,\varepsilon}
		~:=~
		\int_{0}^{t}
		\big[
			F(s+\varepsilon,Y_{s+\varepsilon},m_{s+\varepsilon})
			-
			F(s+\varepsilon,Y_s,m_{s+\varepsilon})
		\big]
		\frac{N_{s+\varepsilon}-N_s}{\varepsilon} ds,
	 \end{align}
	\begin{align}\label{eq: def I2}
		I_t^{2,\varepsilon}
		~:=~
		\int_{0}^{t}
		\big[
			F(s+\varepsilon,Y_s,m_{s+\varepsilon})
			-
			F(s+\varepsilon,Y_s,m_{s})
		\big]
		\frac{N_{s+\varepsilon}-N_s}{\varepsilon} ds,
	 \end{align}
	and
	\begin{align}\label{eq: def I3}
		I^{3,\eps}_t
		~:=~
		\int_{0}^{t}
		\big[
			F(s+\varepsilon,Y_s,m_{s})
			-
			F(s,Y_s,m_{s})
		\big]
		\frac{N_{s+\varepsilon}-N_s}{\varepsilon} ds.
	\end{align}	
	We shall prove in Lemmas \ref{lemm:cvg_I1} and \ref{lemm:cvg_I2} below that
	$$
		I^{1,\eps}_t \longrightarrow \int_{0}^{t} D_y F(s,Y_s,m_s) d[M^Y,N]_s,
		~\mbox{and}~
		I^{3,\eps}_t \longrightarrow 0,
		~t \ge 0,
		~\mbox{u.c.p.},
	$$
	and,  {in} Lemma \ref{THM1}, that
	$$
		I_t^{2,\varepsilon}
		~\longrightarrow~
		\int_{0}^{t}  \E^{\circ} \big[D_m F(s, \cdot, m_s, X_s) \sigma^{\circ}_s \big] (Y_s) d[M^{\circ},N]_s,
		~t \ge 0, \mbox{u.c.p.}
	$$
	This  {will provide the required result.}
	\end{proof}

	\begin{lemma} \label{lemm:cvg_I1}
		 Let the conditions of Theorem \ref{thm:ItoC1} and Condition \eqref{eq: new ass for local} hold. Let $(I^{1,\varepsilon})_{\varepsilon>0}$ be defined as in \eqref{eq: def I1}. Then,
		$$
			I_t^{1,\varepsilon}  \longrightarrow \int_{0}^{t} D_y F(s,Y_s,m_s) d[M^Y,N]_s,~t \ge 0, ~\mbox{u.c.p., as}~ \eps \longrightarrow 0.
		$$
	\end{lemma}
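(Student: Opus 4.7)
I would linearize $F$ in its second argument and reduce the claim to a standard Riemann-sum approximation of a covariation. The fundamental theorem of calculus applied to the $C^1$-function $y \mapsto F(s+\varepsilon, y, m_{s+\varepsilon})$ gives
\begin{equation*}
F(s+\varepsilon, Y_{s+\varepsilon}, m_{s+\varepsilon}) - F(s+\varepsilon, Y_s, m_{s+\varepsilon}) = \Phi^{\varepsilon}_s \cdot (Y_{s+\varepsilon} - Y_s),
\end{equation*}
with $\Phi^{\varepsilon}_s := \int_0^1 D_y F(s+\varepsilon,\, Y_s + \lambda(Y_{s+\varepsilon}-Y_s),\, m_{s+\varepsilon})\, d\lambda$, so that
\begin{equation*}
I_t^{1,\varepsilon} = \int_0^t \Phi^{\varepsilon}_s\, \frac{(Y_{s+\varepsilon}-Y_s)(N_{s+\varepsilon}-N_s)}{\varepsilon}\, ds.
\end{equation*}

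The first step is to show $\sup_{0 \le s \le T} |\Phi^{\varepsilon}_s - D_y F(s, Y_s, m_s)| \to 0$ almost surely as $\varepsilon \to 0$. Under \eqref{eq: new ass for local}, $Y$ is path-wise bounded and continuous and, by Lemma \ref{lemm:m_continuous}, $s \mapsto m_s$ is $\Wc_2$-continuous; hence, for each $\om$, the image of $[0, T+1]$ under $s \mapsto (s, Y_s(\om), m_s(\om))$ is compact in $\R_+ \x \R \x \Pc_2(\R)$. Joint continuity of $D_y F$ promotes to uniform continuity on a compact neighbourhood of this image, while the uniform continuity of $s \mapsto (Y_s(\om), m_s(\om))$ on $[0, T+1]$ supplies a common modulus; the two combine to deliver the required uniform convergence.

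I would then split
\begin{equation*}
I_t^{1,\varepsilon} = \int_0^t D_y F(s, Y_s, m_s)\, \frac{(Y_{s+\varepsilon}-Y_s)(N_{s+\varepsilon}-N_s)}{\varepsilon}\, ds + R_t^{\varepsilon},
\end{equation*}
where $|R_t^{\varepsilon}| \le \|\Phi^{\varepsilon} - D_y F(\cdot, Y_\cdot, m_\cdot)\|_{\infty,[0,T]} \cdot \int_0^T \varepsilon^{-1}|Y_{s+\varepsilon}-Y_s|\,|N_{s+\varepsilon}-N_s|\, ds$. A Cauchy--Schwarz estimate bounds this last integral by the geometric mean of the $\varepsilon$-regularized brackets of $Y$ and $N$, which is tight in $\varepsilon$ since $Y$ has finite quadratic variation and $N$ is a bounded continuous martingale; so $R^{\varepsilon} \to 0$ u.c.p. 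For the main term, the Russo--Vallois covariation-by-regularization theory \cite{Russo and Vallois2} yields, for any continuous adapted integrand $f$, the u.c.p.\ limit $\int_0^t f_s\, \varepsilon^{-1}(Y_{s+\varepsilon}-Y_s)(N_{s+\varepsilon}-N_s)\, ds \to \int_0^t f_s\, d[Y,N]_s$. Applying this with $f_s = D_y F(s, Y_s, m_s)$, and noting that $[Y,N] = [M^Y,N]$ because $A^Y$ is orthogonal to the continuous martingale $N$, produces the claim.

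The main difficulty lies in the uniform-in-$s$ convergence $\Phi^{\varepsilon} \to D_y F(\cdot, Y_\cdot, m_\cdot)$: joint continuity of $D_y F$ alone would not suffice, and it is precisely the $\Wc_2$-continuity of the measure flow from Lemma \ref{lemm:m_continuous}, together with the boundedness of $Y$ from \eqref{eq: new ass for local}, that confines the random trajectory $(s, Y_s, m_s)$ to a compact subset of $\R_+ \x \R \x \Pc_2(\R)$ and permits the upgrade from pointwise to uniform continuity.
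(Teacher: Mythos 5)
Your proposal is correct and follows essentially the same route as the paper: Taylor's theorem with integral remainder to linearize in $y$, a remainder estimate via uniform continuity of $D_y F$ on the a.s.\ compact image of $s \mapsto (s,Y_s,m_s)$ combined with Cauchy--Schwarz against the $\varepsilon$-regularized brackets of $Y$ and $N$, and the Russo--Vallois regularization limit for the main term (the paper cites \cite[Proposition 3.10]{Gozzi and Russo}, which gives the limit directly as $\int D_y F\, d[M^Y,N]$; your version passes through $[Y,N]$ and then uses orthogonality of $A^Y$, which is the same content).
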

	\begin{proof}
	Let us define
	$$
		I_t^{11,\varepsilon}
		:=
		\int_{0}^{t} D_y F(s,Y_s,m_{s}) (Y_{s+\varepsilon} - Y_s)
			\frac{N_{s+\varepsilon}-N_s}{\varepsilon}ds, ~t \ge 0,
	$$
	and
	$$
		I_t^{12,\varepsilon}
		:=
		\int_{0}^{t} \Delta_s^{\varepsilon} (Y_{s+\varepsilon} -Y_s)
			\frac{N_{s+\varepsilon}-N_s}{\varepsilon} ds, ~t \ge 0,
	$$
	with
	$$
		\Delta_s^{\varepsilon}
		:=
		\int_{0}^{1} \left(D_y F(s+\varepsilon, Y_s+\lambda(Y_{s+\varepsilon}-Y_s),m_{s+\varepsilon})
			-D_y F(s,Y_s,m_s)
		\right)d\lambda,
	$$
	so that  $I^{1,\varepsilon}_t = I^{11,\varepsilon}_t + I^{12,\varepsilon}_t$ for all $t \ge 0$.

	\vspace{0.5em}

	First, by similar arguments as in \cite[Proposition 3.10]{Gozzi and Russo}, one easily obtains that
	$$
		I_t^{11,\varepsilon} \xrightarrow[\varepsilon \to 0]{\text{u.c.p.}}
		\int_{0}^{t} D_y F(s,Y_s,m_s) d\left[M^Y,N\right]_s.
	$$
	Further, by (uniform) continuity of $(s,y) \longmapsto D_y F(s, y, m_s)$ on compact sets,
	there exists random variables  {$(\delta(D_y F, \eps))_{\eps>0}$} such that
	$$
		\sup_{0 \le s \le t} \big|\Delta_s^{\varepsilon} \big|
		~\le~
		\delta(D_y F, \eps)
		~\longrightarrow~
		0,
		~\mbox{a.s., as}~
		\eps \longrightarrow 0.
	$$
	Recall that $Y$ has finite quadratic variation and  {that} $N$ is square integrable, so that
	$$
		\left(\int_{0}^{t}\frac{(Y_{s+\varepsilon}-Y_s)^2}{\varepsilon}ds \right) \left(\int_{0}^{t}\frac{(N_{s+\varepsilon}-N_s)^2}{\varepsilon}ds \right)
		\xrightarrow[\varepsilon \to 0]{\text{u.c.p.}} [Y]_t [N]_t < \infty.
	$$
	It follows  {that}
	$$
		|I_t^{12,\varepsilon}|
		~\le~
		\delta\left(D_y F, \varepsilon \right)
		\sqrt{\int_{0}^{t}\frac{(Y_{s+\varepsilon}-Y_s)^2}{\varepsilon}ds \int_{0}^{t}\frac{(N_{s+\varepsilon}-N_s)^2}{\varepsilon}ds}
		~\longrightarrow~
		0, ~t \ge 0,
		~\mbox{u.c.p.}
	$$
	as $\eps\longrightarrow 0$.
	\end{proof}

	\begin{lemma} \label{lemm:cvg_I2}
		 Let the conditions of Theorem \ref{thm:ItoC1} and Condition \eqref{eq: new ass for local} hold. Let $(I^{3,\varepsilon})_{\varepsilon>0}$ be defined as in \eqref{eq: def I3}. Then,  $I^{3,\eps}_t \longrightarrow 0$, $t \ge 0$, u.c.p.~{as $\eps\to 0$.}
	\end{lemma}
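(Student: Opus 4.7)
The plan is to exploit the martingale cancellation in $N$ by rewriting $I^{3,\eps}_t$ as a stochastic integral against $N$ via a Fubini-type swap. A direct Cauchy--Schwarz estimate on $\int_0^t h^\eps_s (N_{s+\eps}-N_s)\,ds$, with $h^\eps_s := \eps^{-1}\Delta^\eps_s$ and $\Delta^\eps_s := F(s+\eps,Y_s,m_s) - F(s,Y_s,m_s)$, fails because $|h^\eps_s|$ can be of order $\eps^{-1}$; but $\Delta^\eps_s$ itself tends to $0$ uniformly in $s$ as $\eps\to 0$, and the martingale structure of $N$ is what allows one to benefit from this.

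First, writing $N_{s+\eps}-N_s = \int_s^{s+\eps} dN_r$ and swapping the order of integration by a stochastic Fubini theorem (valid since $\Delta^\eps$ is continuous and $\F$-adapted, $N$ is a bounded continuous martingale, and the associated integrand is square integrable on $[0,T]\times[0,T+\eps]$), I would obtain
$$
I^{3,\eps}_t \;=\; \int_0^{t+\eps} G^{\eps,t}_r \, dN_r, \qquad G^{\eps,t}_r := \frac{1}{\eps}\int_{(r-\eps)\vee 0}^{r\wedge t} \Delta^\eps_s \, ds.
$$
Setting $\delta_\eps := \sup_{s\le T}|\Delta^\eps_s|$, the pathwise bound $|G^{\eps,t}_r|\le \delta_\eps$ is immediate; and since $Y$ is bounded by the reduction \eqref{eq: new ass for local} and $m$ is $\Wc_2$-continuous on $[0,T]$ by Lemma~\ref{lemm:m_continuous}, the path $\{(Y_s,m_s):s\in[0,T]\}$ is a.s.\ contained in a compact subset of $\R^d\times\Pc_2(\R^d)$, so the joint continuity of $F$ yields $\delta_\eps\to 0$ a.s.

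To remove the $t$-dependence of $G^{\eps,t}$, I would decompose $G^{\eps,t}_r = \tilde G^\eps_r - E^{\eps,t}_r$ with $\tilde G^\eps_r := \eps^{-1}\int_{(r-\eps)\vee 0}^r \Delta^\eps_s\,ds$ (independent of $t$) and $E^{\eps,t}_r := \eps^{-1}\int_t^r \Delta^\eps_s\,ds\cdot\mathbf{1}_{\{t<r\le t+\eps\}}$, so that
$$
I^{3,\eps}_t \;=\; Z^\eps_{t+\eps} - R^\eps_t, \qquad Z^\eps_u := \int_0^u \tilde G^\eps_r\,dN_r, \qquad R^\eps_t := \int_t^{t+\eps} E^{\eps,t}_r\,dN_r.
$$
For $R^\eps_t$, the continuous finite-variation process $r \mapsto \eps^{-1}\int_t^r \Delta^\eps_s\,ds$ has derivative $\Delta^\eps_r/\eps$, and semimartingale integration by parts (the bracket term vanishes since this process is continuous of finite variation and $N$ is continuous) yields the pathwise estimate $\sup_{t\le T}|R^\eps_t| \le 2\delta_\eps \sup_{r\le T+1}|N_r|$, which tends to $0$ a.s. For $Z^\eps$, Doob's $L^2$-inequality together with It\^o's isometry gives
$$
\E\!\Big[\sup_{u\le T+\eps}|Z^\eps_u|^2\Big] \;\le\; 4\,\E\!\big[\delta_\eps^{\,2}\,[N]_{T+\eps}\big],
$$
and a two-step localization -- first on $\{[N]_{T+\eps}\le R\}$, then on the a.s.-finite envelope $\sup_{r\le T+1,\,s\le T}|F(r,Y_s,m_s)|$ that dominates $\delta_\eps$ -- combined with dominated convergence, upgrades the a.s.\ convergence $\delta_\eps\to 0$ into $\sup_u|Z^\eps_u|\to 0$ in probability. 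Combined with the pathwise bound on $R^\eps$, this yields the u.c.p.\ convergence $\sup_{t\le T}|I^{3,\eps}_t|\to 0$.

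The main obstacle is the $t$-dependence of the Fubini integrand $G^{\eps,t}$, which prevents a direct application of Doob's maximal inequality to the family $\{I^{3,\eps}_t\}_{t\le T}$; the decomposition into the genuine $\F$-martingale $Z^\eps$ (independent of $t$) and the pathwise-small remainder $R^\eps$ is tailored to circumvent this. A secondary technical point is the possible lack of uniform $L^2$-integrability of $\delta_\eps\,[N]_{T+\eps}^{1/2}$, handled by the standard localization described above.
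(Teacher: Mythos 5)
Your proposal is correct and follows essentially the same route as the paper: both start from the Fubini-type swap $I^{3,\eps}_t = \int_0^{t+\eps} \eta^\eps_r\,dN_r$ with $\eta^\eps_r = \frac{1}{\eps}\int_{(r-\eps)_+}^{r\wedge t}\Delta^\eps_s\,ds$, note that the integrand is dominated by a locally bounded adapted process and tends to zero, and then invoke a dominated-convergence-type argument for the resulting stochastic integral. The paper concludes directly by citing Jacod--Shiryaev's Theorem I.4.31, whereas you replace that black-box citation with Doob's $L^2$-inequality plus a two-step localization. The one genuine refinement in your write-up is the explicit decomposition $G^{\eps,t}_r = \tilde G^\eps_r - E^{\eps,t}_r$ and the corresponding split $I^{3,\eps}_t = Z^\eps_{t+\eps} - R^\eps_t$: the Fubini integrand $\eta^\eps_r$ does depend on $t$ through the upper limit $r\wedge t$, which the paper glosses over when applying a theorem stated for a single process $\int_0^\cdot H^\eps\,dN$. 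Your isolation of the $t$-independent martingale $Z^\eps$ (to which Doob's inequality genuinely applies) and the pathwise bound $\sup_{t\le T}|R^\eps_t|\le 2\delta_\eps\sup_{r\le T+1}|N_r|$ on the residual piece makes the u.c.p.\ statement fully rigorous, so this is a modest but real improvement in exposition over the paper's shorter argument.
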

	\begin{proof}
		By the integration by parts formula, one can rewrite $I^{3,\eps}_t$ as
		$$
			I_t^{3,\varepsilon} := \int_{0}^{t+\eps} \eta^{\varepsilon}_r dN_r,
			~\mbox{with}~
			\eta^{\varepsilon}_r
			:=
			\frac{1}{\eps} \int_{(r-\varepsilon)_+}^{r \wedge t}
			\big( F(s+\varepsilon,Y_{s},m_{s}) - F(s,Y_s, m_{s}) \big) ds.
		$$
		One observes that $\eta^{\eps}_r \longrightarrow 0$ as $\eps \longrightarrow 0$.
		Moreover, $|\eta^{\eps}|$ is bounded by the  (locally bounded) continuous adapted process $(\bar \eta_r)_{r \ge 0}$ defined as:
		$$
			\bar \eta_r := {2}\max_{s \le r+1}\max_{r' \le r}  \big| F(s, Y_{r'}, m_{r'})  \big|.
		$$
		Then, one can apply e.g. \cite[Theorem I.4.31]{Jacod Shiryaev}  {to deduce that}
		$$
			I_t^{3,\varepsilon} \longrightarrow 0, ~t \ge 0, ~\mbox{u.c.p.}
		$$
	\end{proof}

	To prove Lemma \ref{THM1} below, we need the following two  {intermediate} lemmas.
	\begin{lemma}\label{lemm:conditional_prpty}
		 Let the conditions of Theorem \ref{thm:ItoC1} and Condition \eqref{eq: new ass for local} hold. Let $H$ be a $\F$-progressively measurable process such that
		$$
			\Mb_t := \int_0^t H_s d M_s, ~t \ge 0,
			~\mbox{is a martingale}.
		$$
		Then,
		$$
			\E^{\circ} \big[ \Mb_t \big] = 0, ~\mbox{a.s., for all}~t \ge 0.
		$$
		 {Moreover,}
		$$
			\E^{\circ} \left[ \left( M_t - M_s \right)^2 \right]
			=
			\E^{\circ} \Big[ [ M ]_t - [ M ]_s  \Big],
			~\mbox{a.s., for all}~ t \ge s \ge 0.
		$$
	\end{lemma}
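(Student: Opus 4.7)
The plan is to prove the two identities sequentially, using parts~(ii) and~(iii) of Assumption~\ref{assum:main} in an essential way.

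For the first identity, the key observation is that $\Mb$ inherits orthogonality from $M$: for any c\`adl\`ag $\G$-martingale $N$, $d[\Mb, N]_r = H_r\, d[M, N]_r = 0$ by Assumption~\ref{assum:main}(ii). To conclude $\E^{\circ}[\Mb_t] = 0$, I fix a bounded $\Gc$-measurable $\xi$ and consider the c\`adl\`ag version $N_r := \E[\xi \mid \Gc_r]$, which is a bounded $\G$-martingale with $L^1$-limit $N_\infty = \xi$. The $(H)$-hypothesis yields that $N$ is also an $\F$-martingale: for $A \in \Fc_r$, $\E[\1_A \xi] = \E[\xi\, \E[\1_A \mid \Gc]] = \E[\xi\, \E[\1_A \mid \Gc_r]] = \E[\1_A N_r]$, so $N_r = \E[\xi \mid \Fc_r]$. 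Combining $[\Mb, N] = 0$ with integration by parts shows that $\Mb N$ is a local $\F$-martingale. Since $\Mb$ is continuous (as a stochastic integral against the continuous $M$), I localize with $T_k := \inf\{r : |\Mb_r| \ge k\} \wedge k$: then $\Mb^{T_k} N$ is bounded, hence a true martingale, giving $\E[\Mb^{T_k}_t N_t] = 0$. Letting $k \to \infty$ and using the uniform integrability of $\Mb^{T_k}_t = \E[\Mb_t \mid \Fc_{T_k \wedge t}]$ together with the boundedness of $N$ yields $\E[\Mb_t N_t] = 0$, and the tower property then delivers $\E[\Mb_t \xi] = \E[\Mb_t N_t] = 0$, as required.

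For the second identity, I apply the first part to $H_r := 2(M_r - M_s)\1_{\{r > s\}}$, which is $\F$-progressively measurable and satisfies $\int_0^t H_r\, dM_r = (M_t - M_s)^2 - ([M]_t - [M]_s)$ for $t \ge s$ by It\^o's formula. Under~\eqref{eq: new ass for local}, $\E[[M]_t] < \infty$, so $M$ is an $L^2$-martingale and $\sup_{r \le t} M_r^2 \in L^1$ by Doob's inequality. To ensure the stochastic integral is a true martingale, I localize by $\tau_k := \inf\{r : |M_r| \ge k\} \wedge k$: the truncated integrand $H^k_r := H_r \1_{\{r \le \tau_k\}}$ is bounded, so $\int_0^\cdot H^k_r\, dM_r$ is an $L^2$-martingale, and the first part gives $\E^{\circ}\big[\int_0^{\tau_k \wedge t} H_r\, dM_r\big] = 0$. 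Passing to the limit $k \to \infty$ via conditional dominated convergence, with the integrand dominated by $4\sup_{r \le t} M_r^2 + [M]_t \in L^1$, yields the claimed equality.

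The main technical step is the localization in part~1 needed to promote $\Mb N$ from a local martingale to a genuine martingale; this relies on the continuity of $\Mb$ (inherited from $M$) and on the $(H)$-hypothesis to lift the $\G$-martingale $N$ to an $\F$-martingale so that integration by parts can be applied within the filtration $\F$.
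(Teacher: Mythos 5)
Your proof is correct and follows essentially the same route as the paper's: an integration-by-parts argument against a bounded $\G$-martingale, using $[\Mb,\cdot]=0$ from Assumption~\ref{assum:main}(ii) together with the $(H)$-hypothesis, and then an application of the first identity with integrand $2(M_r-M_s)\1_{\{r>s\}}$ for the second. You are somewhat more explicit than the paper about why the $(H)$-hypothesis promotes the $\G$-martingale $N_r=\E[\xi\,|\,\Gc_r]$ to an $\F$-martingale (which the paper uses implicitly) and about the localization; the extra $\tau_k$-localization in part two is a small avoidable detour, since under~\eqref{eq: new ass for local} one has $\E[[M]_t]<\infty$ so that $\int_s^t 2(M_r-M_s)\,dM_r=(M_t-M_s)^2-([M]_t-[M]_s)$ is already a true martingale, but it does no harm.
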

	\begin{proof}
	\noindent $\mathrm{(i)}$ Recall that $\G$ is the filtration generated by $M^{\circ}$.
	Then, for all $\phi \in C_b(C^d[0,T];\R^d)$, there exists a bounded $\G$-martingale  $ {\Phi} = ( {\Phi}_r)_{r \ge 0}$
	such that $ {\Phi}_r = \E \big[ \phi( {M^{\circ}}) \big| \Gc_r \big]$, for all $r \ge 0$.
	By Assumption \ref{assum:main}.$\mathrm{(ii)}$, one knows that $[\Mb,  {\Phi}]_r = 0$ for all $r \ge 0$.
	Up to a localization argument, one obtains that
	\begin{align*}
		\E \big[\Mb_t \phi( {M^{\circ}}) \big]
		~=~
		\E \left[\Mb_0  {\Phi}_0 + \int_0^t \Mb_r d {\Phi}_r + \int_0^t  {\Phi}_r d \Mb_r + [\Mb,  {\Phi} ]_t \right]
		~ = ~
		0.
	\end{align*}
	Using the (H)-hypothesis condition in Assumption \ref{assum:main}, it follows that
	$$
		\E^{\circ}[\Mb_t]  ~:=~ \E \big[ \Mb_t \big| \Gc \big] = \E \big[ \Mb_t \big| \Gc_t \big] = 0.
	$$
	
	\vspace{0.5em}
	
	\noindent $\mathrm{(ii)}$  {Given} the square integrability conditions on $M$  of  Condition \eqref{eq: new ass for local}, both processes
	$$
		\int_0^t M_r dM_r = \frac12 M^2_t - \frac12 [M]_t,
		~~
		\int_{0}^{t} M_s \mathbf{1}_{\{ r \ge s\}} dM_r,~~t \ge 0,
	$$
	are true martingales.
	Thus
	$$
		\E^{\circ} \Big[ \int_s^t (M_r - M_s) dM_r \Big] = 0.
	$$
	Then, it follows that
	\begin{align*}
		\E^{\circ} \left[\left(M_t - M_s \right)^2 \right]
		~=~
		\E^{\circ} \Big[ \int_s^t 2 (M_r - M_s) dM_r  + [ M ]_t - [M]_s  \Big]
		~=~
		 \E^{\circ} \Big[ [ M ]_t - [M]_s  \Big].
	\end{align*}
	\end{proof}

	\begin{lemma}\label{lemm:QV_exp_lim}
		Let the conditions of Theorem \ref{thm:ItoC1} and Condition \eqref{eq: new ass for local} hold. Then,
		$$
			\lim_{\varepsilon \to 0} \E^{\circ} \left[ \frac{1}{\varepsilon} \int_0^t \left(|A|_{s+\varepsilon} - |A|_s \right)^2 ds \right]
			=
			0,
			~t \ge 0,~\mbox{u.c.p.},
		$$
		\begin{equation*}
			\lim_{\varepsilon \to 0} \E^{\circ} \left[ \frac{1}{\varepsilon} \int_0^t \left(M_{s+\varepsilon} - M_s \right)^2 ds \right]
			=
			\E^{\circ} \big[ [M]_t \big],
			~t \ge 0,~\mbox{u.c.p.}
		\end{equation*}
		and
		$$
		\lim_{\varepsilon \to 0} \E^{\circ} \left[ \frac{1}{\varepsilon} \int_0^t \left( \int_{s}^{s+\varepsilon} \sigma^{\circ}_r dM^{\circ}_r \right)^2 ds \right]
		~=~
		\int_0^t \E^{\circ} \big[ (\sigma^{\circ}_s)^2\big] d[M^{\circ}]_s ,
		~t \ge 0, ~\mbox{u.c.p.}
		$$
	\end{lemma}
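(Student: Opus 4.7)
The plan is to handle the three limits in order of difficulty: (1) uses only continuity and finite variation of $|A|$; (2) combines Fubini with Lemma~\ref{lemm:conditional_prpty}; (3) requires an It\^o-based expansion plus careful control of a residual. For (1), monotonicity of $|A|$ gives $(|A|_{s+\eps}-|A|_s)^2 \le \omega_\eps^{|A|}(t+1)(|A|_{s+\eps}-|A|_s)$, where $\omega_\eps^{|A|}$ is the a.s.\ vanishing modulus of continuity of $|A|$ on $[0,t+1]$; summing yields $\tfrac{1}{\eps}\int_0^t(|A|_{s+\eps}-|A|_s)^2 ds \le \omega_\eps^{|A|}(t+1)\cdot|A|_{t+1} \to 0$ a.s.\ uniformly in $t\le T$, dominated by $|A|_{T+1}^2\in L^1$ by \eqref{eq: new ass for local}, and conditional dominated convergence concludes. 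For (2), Fubini for positive conditional expectations and Lemma~\ref{lemm:conditional_prpty} together replace $\E^\circ[(M_{s+\eps}-M_s)^2]$ by $\E^\circ[[M]_{s+\eps}-[M]_s]$ inside the $ds$-integral; a change of variables $u=s+\eps$ rewrites the conditional expectation as $\E^\circ\big[\tfrac{1}{\eps}\int_t^{t+\eps}[M]_u du - \tfrac{1}{\eps}\int_0^\eps[M]_u du\big]$, and continuity of $[M]$ together with conditional DCT (dominator $2[M]_{T+1}$) yield u.c.p.\ convergence to $\E^\circ[[M]_t]$.

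For (3), let $N^\circ_t := \int_0^t \sigma^\circ_r dM^\circ_r$, so $[N^\circ]_t = \int_0^t (\sigma^\circ_r)^2 d[M^\circ]_r$. It\^o's formula yields the pathwise decomposition
$$(N^\circ_{s+\eps}-N^\circ_s)^2 = \int_s^{s+\eps}(\sigma^\circ_u)^2 d[M^\circ]_u + 2\!\int_s^{s+\eps}(N^\circ_u-N^\circ_s)\,dN^\circ_u,$$
so averaging over $s\in[0,t]$ splits the quantity of interest into a \emph{main} term and an \emph{error} $E^\eps_t := \tfrac{2}{\eps}\int_0^t\!\int_s^{s+\eps}(N^\circ_u-N^\circ_s) dN^\circ_u\,ds$. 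By Fubini, the main term equals $\int_0^{t+\eps}\!\tfrac{(r\wedge t)-(r-\eps)_+}{\eps}(\sigma^\circ_r)^2 d[M^\circ]_r$; since $d[M^\circ]$ is $\Gc$-measurable, its conditional expectation is $\int_0^{t+\eps}\!\tfrac{(r\wedge t)-(r-\eps)_+}{\eps}\E^\circ[(\sigma^\circ_r)^2] d[M^\circ]_r$, which converges u.c.p.\ to the target $\int_0^t \E^\circ[(\sigma^\circ_r)^2] d[M^\circ]_r$ by bounded pointwise convergence of the weight to $\mathbf{1}_{[0,t]}(r)$ and continuity of $r \mapsto \int_0^r \E^\circ[(\sigma^\circ_u)^2] d[M^\circ]_u$.

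The main obstacle is showing $\sup_{t\le T}|\E^\circ[E^\eps_t]|\to 0$ in probability: $E^\eps$ is not a martingale in $t$, so Doob's inequality is not directly applicable. A second application of Fubini writes $E^\eps_t = \int_0^{t+\eps}K^{\eps,t}_u dN^\circ_u$ with $|K^{\eps,t}_u|\le 2\omega_\eps^{N^\circ}(u)$. Comparing with the genuine martingale $\tilde E^\eps_v := \int_0^v K^\eps_u dN^\circ_u$ (where $K^\eps_u := \tfrac{2}{\eps}\int_{(u-\eps)_+}^u (N^\circ_u-N^\circ_s) ds$), Doob's inequality and It\^o's isometry provide
$$\E\Big[\sup_{t\le T}(E^\eps_t)^2\Big] \le C\,\E\Big[\int_0^{T+1}\omega_\eps^{N^\circ}(u)^2 (\sigma^\circ_u)^2 d[M^\circ]_u\Big] \xrightarrow[\eps\to 0]{} 0,$$
by dominated convergence, after a further localization bounding $\sup_{u\le T+1}|N^\circ_u|$ and $[M^\circ]_{T+1}$ to secure an $L^1$-dominator for the integrand. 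The tower-style chain $\E\big[\sup_t|\E^\circ[E^\eps_t]|\big]\le \E\big[\E^\circ[\sup_t|E^\eps_t|]\big] = \E[\sup_t|E^\eps_t|] \le \sqrt{\E[\sup_t(E^\eps_t)^2]} \to 0$ then delivers u.c.p.\ convergence of $\E^\circ[E^\eps]$ to zero, completing the proof.
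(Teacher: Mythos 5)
Your treatments of the first two limits are correct. Your argument for the $|A|$ term is in fact cleaner than the paper's: you bound $(|A|_{s+\eps}-|A|_s)^2$ by the modulus of continuity times the increment, integrate the telescoping bound, and invoke conditional dominated convergence; the paper instead expands the square and runs a Fubini computation. Your argument for the $M$ term is the same as the paper's (Lemma~\ref{lemm:conditional_prpty} plus change of variables).

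For the third limit, however, there is a genuine gap in the treatment of the error term $E^\eps$. Your decomposition via It\^o and Fubini is the same as the paper's (their eq.~\eqref{eq:V2_eps_M0}), but the order in which $\E^\circ$ is applied matters crucially. The paper takes $\E^\circ$ \emph{first}: the error becomes a stochastic integral $\int_0^{\cdot} H^\eps_r\, dM^\circ_r$ in which both the kernel $H^\eps_r = \frac{1}{\eps}\int_{(r-\eps)_+}^{r\wedge t}\E^\circ[(\Mb^\circ_r-\Mb^\circ_s)\sigma^\circ_r]\,ds$ and its dominator $H_r = 2\sqrt{\E^\circ[|\sigma^\circ_r|^2]}\,\sup_{s\le r}\sqrt{\E^\circ[(\Mb^\circ_s)^2]}$ are $\G$-adapted (via the (H)-hypothesis). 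Localization of $H$ can therefore be done with $\G$-stopping times, which are compatible with $\E^\circ[\,\cdot\,]=\E[\,\cdot\,|\Gc]$, and the DCT for stochastic integrals applies on the stopped interval. You instead take $\E^\circ$ \emph{last}: you aim for $\E[\sup_t(E^\eps_t)^2]\to 0$ with dominator $\omega_\eps^{N^\circ}(T+1)^2\,[N^\circ]_{T+1}$. Under Condition~\eqref{eq: new ass for local} one has $[N^\circ]_{T+1}\in L^1$ and $\sup_u|N^\circ_u|\in L^2$, but there is no $L^1$ control of their product; an $L^4$-type condition would be needed. The ``further localization'' you invoke to repair this cannot work as stated: the natural stopping time that bounds $\sup_u|N^\circ_u|$ and $[M^\circ]$ is an $\F$-stopping time, not a $\G$-stopping time, so $\{T<\tau\}\notin\Gc$ and $\E^\circ[E^{\eps,\tau}_t]$ does \emph{not} coincide with $\E^\circ[E^\eps_t]$ on $\{T<\tau\}$. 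In other words, the localization is incompatible with the conditional expectation $\E^\circ$, and the chain $\E\big[\sup_t|\E^\circ[E^\eps_t]|\big]\le\sqrt{\E[\sup_t(E^\eps_t)^2]}\to 0$ is not justified under the stated hypotheses. (There is also a secondary, fixable, imprecision: $E^\eps_t=\int_0^{t+\eps}K^{\eps,t}_u\,dN^\circ_u$ with a $t$-dependent kernel, so a correction over $[t,t+\eps]$ must be controlled before Doob applies to the martingale $\tilde E^\eps$; the paper has the same loose end and it is routine to patch, but it should be acknowledged.)
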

	\begin{proof}
	$\mathrm{(i)}$ Notice that the total variation process $(|A|_t)_{t \ge 0}$ of $A$ is a continuous non-decreasing process.
	By direct computations, it follows that
	\begin{align*}
		& ~ \lim_{\varepsilon \to 0}
		\E^{\circ} \left[ \frac{1}{\varepsilon}\int_{0}^{t}
		\left(\left|A\right|_{s+\varepsilon} - \left|A\right|_s \right)^2 ds
		\right]  \\
		= &~
		\lim_{\varepsilon \to 0}
		\frac{1}{\eps}
		\Big[
		\int_t^{t+\eps} \E^{\circ} \big[ |A|_s^2 \big] ds
		-
		\int_0^{\eps} \E^{\circ} \big[ |A|_s^2 \big] ds
		-
		2 \int_{0}^t \E^{\circ} \big[  |A|_s \big( |A|_{s+\varepsilon} - |A|_s \big) \big] ds
		\Big]
		\\
		= &~
		\E^{\circ} \big[ |A|_t^2 \big]
		-
		\lim_{\varepsilon \to 0}
		\E^{\circ} \left[ 2 \int_{0}^{t}
		\frac{1}{\varepsilon}
		\int_{s}^{s+\varepsilon} \left|A\right|_s d\left|A\right|_r ds
		\right] \\
		= &~
		\E^{\circ} \big[ |A|_t^2 \big]
		-
		\lim_{\varepsilon \to 0}
		\E^{\circ} \left[ 2 \int_{0}^{t+\eps}
		\frac{1}{\varepsilon}
		\int_{(r-\varepsilon)_+}^{r \wedge t}
			\left|A\right|_s ds d\left|A\right|_r
		\right] \\
		= & ~
		\E^{\circ} \big[ |A |_t^2  \big]
		-
		\E^{\circ} \left[\int_{0}^{t}
                   2 \left|A\right|_r
                   d\left|A\right|_r
                   \right]\
                   =~ 0, ~\mbox{a.s.}
	\end{align*}
	In the above, we used the square integrability condition on $|A|_t$ in Condition \eqref{eq: new ass for local}  to deduce that
	\begin{align*}
		\lim_{\varepsilon \to 0}
		\E^{\circ} \left[ \int_{0}^{t+\eps}
		\frac{1}{\varepsilon}
		\int_{(r-\varepsilon)_+}^{r \wedge t}
			\left|A\right|_s ds d\left|A\right|_r
		\right]
		&=
		\E^{\circ} \left[ \lim_{\varepsilon \to 0} \int_{0}^{t+\eps}
		\frac{1}{\varepsilon}
		\int_{(r-\varepsilon)_+}^{r \wedge t}
			\left|A\right|_s ds d\left|A\right|_r
		\right] \\
		&=
		\E^{\circ} \left[\int_{0}^{t}
                   \left|A\right|_r
                   d\left|A\right|_r
                   \right], ~\mbox{a.s.}
	\end{align*}

	\vspace{0.5em}
	
	\noindent $\mathrm{(ii)}$ Next, by Lemma \ref{lemm:conditional_prpty}, it follows that
	$$
		\lim_{\varepsilon \to 0} \E^{\circ} \left[ \frac{1}{\varepsilon} \int_0^t \left(M_{s+\varepsilon} - M_s \right)^2 ds \right]
		=
		\lim_{\eps \to 0} \frac{1}{\eps} \int_0^t \E^{\circ} \big[ [M]_{s+\eps} - [M]_s \big] ds
		=
		\E^{\circ} \big[ [ M]_t \big], ~\mbox{a.s.}
	$$

	\noindent $\mathrm{(iii)}$ Let us set
	$$
		\Mb^{\circ}_t := \int_0^t \sigma^{\circ}_s dM^{\circ}_s, ~t \ge 0.
	$$
	Then,
	\begin{align} \label{eq:V2_eps_M0}
		& \E^{\circ} \Big[ \frac{1}{\varepsilon} \int_{0}^{t}   \big( \Mb^{\circ}_{s+\varepsilon} - \Mb^{\circ}_s \big)^2 ds \Big]  \nonumber \\
		=&~
		\E^{\circ}  \Big[
			\int_{0}^{t} \frac{1}{\eps} \Big( \int_s^{s+\eps} 2 (\Mb^{\circ}_r - \Mb^{\circ}_s) d \Mb^{\circ}_r + \int_s^{s+\eps} d [\Mb^{\circ}]_r \Big) ds
		\Big] \nonumber \\
		= &~
		2 \int_0^{t+\eps} \frac{1}{\eps} \int_{(r-\eps)_+}^{r \wedge t} \E^{\circ}\big[ (\Mb^{\circ}_r - \Mb^{\circ}_s) \sigma^{\circ}_r \big] ds ~d M^{\circ}_r
		~+~
		\E^{\circ} \Big[ \int_0^{t+\eps} \frac{r \wedge t - (r-\eps)_+ }{\eps}  d[\Mb^{\circ}]_r \Big].
	\end{align}
	Let us define
	$$
		H^{\eps}_r := \frac{1}{\eps} \int_{(r-\eps)_+}^{r \wedge t} \E^{\circ}\big[ (\Mb^{\circ}_r - \Mb^{\circ}_s) \sigma^{\circ}_r \big] ds.
	$$	
   Under the square integrability conditions in  \eqref{eq: new ass for local}, it is easy to check that
	\begin{equation} \label{eq:Heps0}
		\big| H^{\eps}_r \big|
		~\le~
		\sqrt{ \E^{\circ} \big[ |\sigma^{\circ}_r|^2 \big]}
		~
		\frac{1}{\eps} \int_{(r-\eps)_+}^{r\wedge t} \sqrt{ \E^{\circ} \big[ \big(\Mb^{\circ}_r - \Mb^{\circ}_s \big)^2 \big]} ds
		~\longrightarrow~
		0, ~\mbox{as}~ \eps \longrightarrow 0.
	\end{equation}
	Moreover, one has
	$$
		\big| H^{\eps}_r \big|
		~\le~
		H_r
		~:=~
		2 \sqrt{ \E^{\circ} \big[ |\sigma^{\circ}_r|^2 \big]} ~ \sup_{s \le r}  \sqrt{ \E^{\circ} \big[ (\Mb^{\circ}_s)^2 \big] }.
	$$
	Again, under the square integrability conditions in \eqref{eq: new ass for local}, the process $\big(\E^{\circ} \big[ (\Mb^{\circ}_s)^2 \big] \big)_{s \ge 0}$ is continuous so that one can localize it by a sequence of stopping times, which can be considered to be $(\tau_n)_{n \ge 1}$  w.l.o.g. It follows that, for a sequence of positive constants $(C_n)_{ n \ge 1}$,
	$$
		\E \Big[ \int_0^{t \wedge \tau_n} H^2_r d[M^{\circ}]_r \Big]
		~\le~
		C_n \E \Big[ \int_0^t |\sigma^{\circ}_r|^2 d[M^{\circ}]_r \Big] < \infty.
	$$
	Together with \eqref{eq:Heps0}, one can deduce that
	\begin{equation} \label{eq:cvg_int_H}
		\int_0^{t+\eps} \frac{1}{\eps} \int_{(r-\eps)_+}^{r \wedge t} \E^{\circ}\big[ (\Mb^{\circ}_r - \Mb^{\circ}_s) \sigma^{\circ}_r \big] ds ~d M^{\circ}_r
		=
		\int_0^{t+\eps} H^{\eps}_r d M^{\circ}_r
		\longrightarrow
		0, ~t \ge 0,
		~\mbox{u.c.p.}
	\end{equation}
	Further, it is easy to see that
	$$
		\E^{\circ} \Big[ \int_0^{t+\eps} \frac{r \wedge t - (r-\eps)_+ }{\eps}  d[\Mb^{\circ}]_r \Big]
		~\longrightarrow~
		\E^{\circ} \big[ [\Mb^{\circ}]_t \big], ~\mbox{a.s.}
	$$
	Together with \eqref{eq:V2_eps_M0} and \eqref{eq:cvg_int_H}, this leads to
	$$
		\lim_{\varepsilon \to 0} \E^{\circ} \left[ \frac{1}{\varepsilon} \int_0^t \left( \Mb^{\circ}_{s+\varepsilon} - \Mb^{\circ}_s \right)^2 ds \right]
		~=~
		\E^{\circ} \big[ [\Mb^{\circ}]_t \big]
		=
		\int_0^t \E^{\circ} \big[ (\sigma^{\circ}_s)^2\big] d[M^{\circ}]_s ,
		~t \ge 0, ~\mbox{u.c.p.}
	$$
	This concludes the proof.
	\end{proof}

	\begin{lemma}\label{THM1}
		 Let the conditions of Theorem \ref{thm:ItoC1} and Condition \eqref{eq: new ass for local} hold. Let $(I^{2,\varepsilon})_{\varepsilon>0}$ be defined as in \eqref{eq: def I2}. Then,
		$$
			I_t^{2,\varepsilon}
			~\longrightarrow~
			\int_{0}^{t}  \E^{\circ} \big[D_m F(s, \cdot, m_s, X_s) \sigma^{\circ}_s \big] (Y_s) d[M^{\circ},N]_s,
			~t \ge 0, \mbox{u.c.p., as}~
			\eps \longrightarrow 0.
		$$
	\end{lemma}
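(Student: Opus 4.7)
The strategy is to linearize the $m$-increment of $F$ using the linear functional derivative identity \eqref{eq:def_dmF} together with $D_m F = \partial_x \delta_m F$, and then exploit the semimartingale decomposition of $X$ to isolate which piece of $X_{s+\eps}-X_s$ survives in the limit. Using that $m_s = \Lc(X_s \mid \Gc)$, one can rewrite
\begin{equation*}
F(s+\eps, y, m_{s+\eps}) - F(s+\eps, y, m_s) = \int_0^1\!\int_0^1 \E^{\circ}\!\left[\Phi^{\lambda,\eta,\eps}_s(y)(X_{s+\eps}-X_s)\right] d\eta\,d\lambda,
\end{equation*}
with $\Phi^{\lambda,\eta,\eps}_s(y) := D_m F(s+\eps, y, m_s+\lambda(m_{s+\eps}-m_s), X_s+\eta(X_{s+\eps}-X_s))$. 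Substituting $y = Y_s$ and splitting $X_{s+\eps}-X_s = (A_{s+\eps}-A_s) + (M_{s+\eps}-M_s) + \int_s^{s+\eps}\sigma^{\circ}_r dM^{\circ}_r$ yields the decomposition $I^{2,\eps}_t = J^{A,\eps}_t + J^{M,\eps}_t + J^{\circ,\eps}_t$.

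\textbf{Vanishing of the $A$ and $M$ pieces.} I would bound $J^{A,\eps}_t$ by Cauchy-Schwarz first inside $\E^{\circ}[\cdot]$ (using Assumption \ref{assum:second} to control $\E^{\circ}[(\Phi^{\lambda,\eta,\eps}_s(Y_s))^2]$ uniformly in $(\lambda,\eta)$) and then over $ds$; the first limit of Lemma \ref{lemm:QV_exp_lim} together with the boundedness of $N$ forces $J^{A,\eps}_t \to 0$ u.c.p. For $J^{M,\eps}_t$, the key input is that for each fixed $y$, $D_m F(s,y,m_s,X_s)$ is $\Fc_s$-measurable, so Lemma \ref{lemm:conditional_prpty}$(\mathrm{i})$ applied to the martingale $\int_0^\cdot D_m F(s,y,m_s,X_s)\mathbf{1}_{(s, s+\eps]}(r)\,dM_r$ yields $\E^{\circ}[D_m F(s,y,m_s,X_s)(M_{s+\eps}-M_s)]=0$. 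One then replaces $\Phi^{\lambda,\eta,\eps}_s$ by $D_m F(s,\cdot,m_s,X_s)$ in the integrand and estimates the resulting error via joint continuity of $D_m F$, the $\Wc_2$-continuity of $m$ (Lemma \ref{lemm:m_continuous}) and continuity of $X$, together with the second limit of Lemma \ref{lemm:QV_exp_lim} and Assumption \ref{assum:second}.

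\textbf{The $\int\sigma^{\circ}dM^{\circ}$ piece.} Applying the same freezing of $\Phi^{\lambda,\eta,\eps}_s$ onto $D_m F(s, \cdot, m_s, X_s)$ (with error vanishing via the third limit of Lemma \ref{lemm:QV_exp_lim} and Assumption \ref{assum:second}), $J^{\circ,\eps}_t$ reduces to
\begin{equation*}
\int_0^t \E^{\circ}\!\left[D_m F(s,y,m_s,X_s)\!\int_s^{s+\eps}\!\sigma^{\circ}_r dM^{\circ}_r\right]\!\Big|_{y=Y_s} \frac{N_{s+\eps}-N_s}{\eps}\, ds.
\end{equation*}
Since $D_m F(s,y,m_s,X_s)$ is $\Fc_s$-measurable for each fixed $y$, the $\Gc$-conditional expectation can be pushed inside the stochastic integral against $M^{\circ}$ by the $(H)$-hypothesis of Assumption \ref{assum:main}, yielding an integrand that to leading order in $\eps$ approximates $\E^{\circ}[D_m F(s,y,m_s,X_s)\sigma^{\circ}_s]$ times $(M^{\circ}_{s+\eps}-M^{\circ}_s)(N_{s+\eps}-N_s)/\eps$; a standard u.c.p.~argument of the type underlying Lemma \ref{lemm:cvg_I1} then delivers the expected limit $\int_0^t \E^{\circ}[D_m F(s,\cdot,m_s,X_s)\sigma^{\circ}_s](Y_s)\,d[M^{\circ},N]_s$.

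\textbf{Main obstacle.} The most delicate step is the treatment of $J^{M,\eps}_t$: the key cancellation $\E^{\circ}[\cdot(M_{s+\eps}-M_s)] = 0$ does \emph{not} hold for the original $\Phi^{\lambda,\eta,\eps}_s$, which is $\Fc_{s+\eps}$-measurable through its dependence on $X^{\eta,\eps}_s$ (and hence on $M$); it only emerges after freezing at time $s$. Making the resulting remainder vanish u.c.p.~requires one to combine joint continuity of $D_m F$, the quadratic-variation estimates of Lemma \ref{lemm:QV_exp_lim}, and the uniform $L^2$-control of $D_m F$ along the entire $(\lambda,\eta)$-interpolation family provided by Assumption \ref{assum:second}.
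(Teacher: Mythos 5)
Your proposal is correct and follows essentially the paper's route: linearize via the functional derivative identity, freeze $D_m F$ at the base point $(s,m_s,X_s)$ with a continuity remainder, use Lemma \ref{lemm:conditional_prpty} to annihilate the $M$-increment, and rely on the three limits of Lemma \ref{lemm:QV_exp_lim}. You merely reorganize the decomposition: you split $X_{s+\eps}-X_s$ into its $A$-, $M$- and $\int\sigma^{\circ}dM^{\circ}$-parts first and then freeze the integrand within each, whereas the paper freezes first (isolating a single continuity remainder $J^{1,\eps}$, controlled once by Cauchy--Schwarz against the full $X$-increment) and only then splits the frozen term $J^{2,\eps}$ into $J^{21,\eps}+J^{22,\eps}+J^{23,\eps}$. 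Both orderings work; the paper's is marginally tidier because the freezing error is dealt with once rather than three times, and your correct identification of the ``main obstacle'' (the integrand is $\Fc_{s+\eps}$-measurable, so the cancellation only appears after freezing) is exactly what the paper's order handles cleanly. One place where your sketch is looser than the paper: for the $\int\sigma^{\circ}dM^{\circ}$-piece, after pushing $\E^{\circ}$ inside the stochastic integral, the paper explicitly replaces the frozen integrand $\E^{\circ}[D_m F(s+\eps,\cdot,m_s,X_s)\sigma^{\circ}_r](Y_s)$ by the running-time version $\E^{\circ}[D_m F(r,\cdot,m_r,X_r)\sigma^{\circ}_r](Y_r)$, controlling the difference via the function $\psi(s,r,\eps)$, It\^o's isometry and dominated convergence, so that \cite[Proposition 1.1]{Russo and Vallois2} delivers the $d[M^{\circ},N]$ limit; your appeal to ``a standard u.c.p.\ argument of the type underlying Lemma \ref{lemm:cvg_I1}'' is sound in spirit but would need this substitution step made precise to close the proof.
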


	\begin{proof}
	Recall that
	$$
		I_t^{2,\varepsilon}
		~:=~
		\int_{0}^{t}
		\big[
			F(s+\varepsilon,Y_s,m_{s+\varepsilon})
			-
			F(s+\varepsilon,Y_s,m_{s})
		\big]
		\frac{N_{s+\varepsilon}-N_s}{\varepsilon} ds,
	$$
	and that
	$$
		m_s = \Lc(X_s | \Gc), ~s \ge 0.
	$$
	By the definition of $\delta_m F$ and $D_m F$ in \eqref{eq:def_dmF} and \eqref{eq:def_DmF},
	it follows that
	\begin{align*}
		I^{2, \eps}_t
		= &
		\int_0^t \int_0^1
		\E^{\circ} \Big[ \delta_m F(s+\varepsilon, \cdot, m_s^{\lambda,\varepsilon}, X_{s+\varepsilon})-\delta_m F(s+\varepsilon, \cdot, m_s^{\lambda,\varepsilon}, X_s) \Big] (Y_s)
		\frac{N_{s+\varepsilon}-N_s}{\varepsilon} d\lambda ds \\
		= &
		\int_0^t \int_0^1 \int_0^1
		\E^{\circ} \Big[ D_m F(s+\varepsilon, \cdot, m_s^{\lambda, \varepsilon}, X_s^{\eta,\varepsilon}) (X_{s+ \varepsilon}-X_s) \Big] (Y_s)
		\frac{N_{s+\varepsilon}-N_s}{\varepsilon} d\eta  d\lambda ds,
	\end{align*}
	where
	$$
		m^{\lambda, \eps}_s := m_s + \lambda ( m_{s+\eps} - m_s)
		~\mbox{and}~
		X^{\eta, \eps}_s := X_s + \eta(X_{s +\eps} - X_s).
	$$
	Let us write
	$$
		I^{2, \eps}_t
		~=~
		J_t^{1,\varepsilon} + J_t^{2,\varepsilon}, ~t \ge 0,
	$$
	where
	$$
		J_{t}^{1,\varepsilon} :=
		\int_0^{t} \int_0^1 \int_0^1 \E^{\circ}
			\Big[
				\Delta_m F(s, s+\eps, \cdot, \lambda, \eta)
				~(X_{s+ \varepsilon}-X_s)
			\Big] (Y_s)
		\frac{N_{s+\varepsilon}-N_s}{\varepsilon} d\eta d\lambda  ds,
	$$
	with
	$$
		\Delta_m F(s, t, y, \lambda, \eta)
		:=
		D_m F\big(t, y, m_s+ \lambda(m_t-m_s), X_s + \eta (X_t - X_s) \big)-D_m F \big(t, y, m_s,X_s \big),
	$$
	and
	$$
		J_{t}^{2,\varepsilon} :=
		\int_0^{t} \E^{\circ} \Big[ D_m F(s+\eps, \cdot, m_s, X_s) (X_{s+\varepsilon}-X_s) \Big] (Y_s)
		\frac{N_{s+\varepsilon}-N_s}{\varepsilon}ds.
	$$

	\noindent $\mathrm{(i)}$ To study the limit of $J^{1,\eps}_t$, we notice that the map $(s,t,y) \longmapsto \Delta_m F(s,t,y, \lambda, \eta)$ is continuous.  {Hence, by Condition \eqref{eq: new ass for local},}
	$$
		(s,t,y) \longmapsto \int_0^1 \int_0^1 \E^{\circ} \Big[ \big( \Delta_m F(s, t, y, \lambda, \eta) \big)^2 \Big] d \eta d \lambda
	$$
	is also continuous and hence uniformly continuous on compact sets.
	In particular, one has
	$$
		\lim_{\eps \to 0} \Delta_t(\eps) = 0,
		~\mbox{a.s., with}~
		\Delta_t(\eps) := \sup_{s \le t} \int_0^1 \int_0^1 \E^{\circ} \Big[ \big( \Delta_m F(s, s + \eps, \cdot, \lambda, \eta) \big)^2 \Big] (Y_s) d \eta d \lambda  .
	$$
	Further, by Cauchy-Schwarz inequality,
	\begin{align*}
		\sup_{s \le t} \big |J^{1,\eps}_s \big|
		&~\le~
		\sqrt{\Delta_t(\eps)}
		~\sqrt{ \frac{1}{\varepsilon}\int_{0}^{t}\E^{\circ} \Big[ \big(X_{s+\varepsilon}-X_s \big)^2 \Big] ds}
		~\sqrt{\frac{1}{\eps} \int_0^t (N_{s+\varepsilon}-N_s)^2 ds}.
       \end{align*}
	 {Since}  the limit in probability of
	$$
		\frac{1}{\eps}  \int_0^t (N_{s+\varepsilon}-N_s)^2 ds
		~\mbox{and}~
		\frac{1}{\eps}\int_0^t \E^{\circ} \Big[(X_{s+\varepsilon}-X_s)^2 \Big] ds
	$$
	are both finite a.s., by the fact that $N$ has finite quadratic variation and  by Lemma \ref{lemm:QV_exp_lim} for the right-hand side term,
	 {w}e hence conclude that
	$$
		J^{1,\eps}_t \longrightarrow 0,
		~t \ge 0,
		~\mbox{u.c.p. as}~
		\eps \longrightarrow 0.
	$$

	\noindent $\mathrm{(ii)}$ We next consider $J^{2, \eps}_t$, and write it  {as}
	$$
		J^{2, \eps}_t
		~=~
		J^{21, \eps}_t + J^{22, \eps}_t + J^{23, \eps}_t,
	$$
	where
	$$
		J_{t}^{21,\varepsilon} :=
		\int_0^{t} \E^{\circ} \big[ D_m F(s+\eps, \cdot, m_s, X_s) (A_{s+\varepsilon} - A_s) \big] (Y_s)
		\frac{N_{s+\varepsilon}-N_s}{\varepsilon}ds,
	$$
	$$
		J_{t}^{22,\varepsilon} :=
		\int_0^{t} \E^{\circ} \big[ D_m F(s+\eps, \cdot, m_s, X_s) (M_{s+\varepsilon} - M_s) \big] (Y_s)
		\frac{N_{s+\varepsilon}-N_s}{\varepsilon}ds,
	$$
	and
	$$
		J_{t}^{23,\varepsilon} :=
		\int_0^{t} \E^{\circ} \Big[ D_m F(s+\eps, \cdot, m_s, X_s) \int_s^{s+\eps} \sigma^{\circ}_r dM^{\circ}_r \Big] (Y_s)
		\frac{N_{s+\varepsilon}-N_s}{\varepsilon}ds.
	$$
	First, observe that, under  {Condition \eqref{eq: new ass for local}}, one has
	$$
		\E^{\circ} \Big[
		\left(
		D_m F\left(r, y, m_s^{\lambda,t}, X_s^{\eta,t}\right)
		\right)^2
		\Big] \le C,
	$$
	for some constant $C > 0$, and
	$$
		\big| J^{21,\eps}_t \big|
		~\le~
		\sqrt{C} \sqrt{ \frac{1}{\eps} \int_0^t \E^{\circ} [ (A_{s+\eps}-A_s)^2] ds} ~ \sqrt{ \frac{1}{\eps} \int_0^t (N_{s+\eps} - N_s)^2 ds }
		\longrightarrow 0, ~\mbox{u.c.p.},
	$$
	by Lemma \ref{lemm:QV_exp_lim}.
	
	\vspace{0.5em}

	Next, by Lemma \ref{lemm:conditional_prpty},
	$$
		\E^{\circ} \Big[ D_m F(s+\eps, y, m_s, X_s) \int_s^{s+\eps} dM_r \Big] = 0, ~\mbox{a.s. for all}~s \ge 0.
	$$
	Thus,
	$$
		J^{22,\eps}_t = 0, ~\mbox{for all}~t \ge 0, ~\mbox{a.s.}
	$$
	
	Finally, we study the limit of $J^{23,\eps}_t$.
	Define
   $$
   \psi(s,r,\eps) := \E^{\circ} \Big[D_m F(s+\eps,\cdot,m_s,X_s) \sigma^{\circ}_r \Big](Y_s) -
                       \E^{\circ} \Big[D_m F(r,\cdot,m_r,X_r) \sigma^{\circ}_r \Big](Y_r).
   $$
   We claim that
   \begin{align}\label{eq:claim psi eps}
   \lim_{\eps \to 0} \int_0^{t} \int_s^{s+\eps} \psi(s,r,\eps)  dM^{\circ}_r
		        \frac{N_{s+\varepsilon}-N_s}{\varepsilon}ds
   = 0,~ t \ge 0, ~u.c.p., ~\mbox{as}~ \eps \longrightarrow 0.
  \end{align}
   Then, setting $D_t :=  \int_0^{t}\E^{\circ} \Big[ D_m F(r, \cdot, m_r, X_r)
\sigma^{\circ}_r \Big] (Y_r) dM^{\circ}_r$, we can apply \cite[Proposition 1.1]{Russo and Vallois2} to deduce that
	\begin{align*}
		\lim_{\epsilon \to 0} J^{23,\epsilon}_t
		~=~ &       \lim_{\epsilon \to 0} \int_0^{t}  \int_s^{s+\epsilon}\E^{\circ} \Big[ D_m F(r, \cdot, m_r, X_r)
					\sigma^{\circ}_r \Big] (Y_r) dM^{\circ}_r
					\frac{N_{s+\varepsilon}-N_s}{\varepsilon}ds \\
		~=~ &		\lim_{\epsilon \to 0} \int_0^{t}  (D_{s+\epsilon} - D_s)
		\frac{N_{s+\varepsilon}-N_s}{\varepsilon}ds\\
		~=~& [D,N]_t \\
		~=~ &       \int_0^{t} \E^{\circ} \Big[  D_m F(r, \cdot, m_r, X_r)
					{\sigma^{\circ}_r}  \Big] (Y_r)
					d[M^{\circ},N]_r,
		\end{align*}
where the last equality follows from the property of the classical covariation.

	\vspace{0.5em}

   To conclude, it remains to prove our claim \eqref{eq:claim psi eps}.
   By Cauchy-Schwarz inequality,
   \begin{align*}
         &      {\left| J^{23,\eps}_t
               - \int_0^{t} \E^{\circ} \Big[ \int_s^{s+\eps} D_m F(r, \cdot, m_r, X_r)  \sigma^{\circ}_r dM^{\circ}_r \Big] (Y_r)
		        \frac{N_{s+\varepsilon}-N_s}{\varepsilon}ds  \right|}\\
   ~=~   &     {\left| \int_0^{t} \int_s^{s+\eps} \psi(s,r,\eps)  dM^{\circ}_r
		        \frac{N_{s+\varepsilon}-N_s}{\varepsilon}ds \right|}\\
   ~\le~ &     \sqrt{\frac{1}{\eps}\int_0^{t} \left(\int_s^{s+\eps} \psi(s,r,\eps)
               dM^{\circ}_r \right)^2 ds}
		        \sqrt{\frac{1}{\eps}\int_{0}^{t}(N_{s+\varepsilon}-N_s)^2 ds}.
   \end{align*}
	 {Thus, it} suffices to show that
	\begin{equation} \label{eq:interm_limit}
		\frac{1}{\eps}\int_0^{t} \E [(\int_s^{s+\eps} \psi(s,r,\eps)
               dM^{\circ}_r )^2] ds \longrightarrow 0,
               ~\mbox{as}~\eps \longrightarrow 0.
	\end{equation}
	By Ito's isometry,
	\begin{align*}
		\frac{1}{\eps}\int_0^{t} \E \Big[\left(\int_s^{s+\eps} \psi(s,r,\eps)
               dM^{\circ}_r \right)^2 \Big] ds \nonumber
		~=~   &
		\E \Big[ \int_{0}^{t} \frac{1}{\eps} \int_s^{s+\eps}
               \psi(s,r,\eps)^2
               d[M^{\circ}]_r ds
               \Big] \nonumber\\
		~=~   &     \E \Big[ \int_{0}^{t+\eps} \frac{1}{\eps} \int_{(r-\eps)_+}^{r}
               \psi(s,r,\eps)^2 ds d[M^{\circ}]_r
               \Big].
	\end{align*}
	 {Also,} $\frac{1}{\eps} \int_{(r-\eps)_+}^{r}\psi(s,r,\eps)^2 ds \longrightarrow 0$ as $\eps \longrightarrow 0$,
	and
	\begin{align*}
		\frac{1}{\eps} \int_{(r-\eps)_+}^{r}
               \psi(s,r,\eps)^2 ds
		&
		~\le~
		4 C \E^{\circ}\big[(\sigma_r^{\circ})^2\big],
	\end{align*}
	and
	$$
		\E \Big[ \int_{0}^{2t} 4C \E^{\circ}\big[(\sigma_r^{\circ})^2\big] d[M^{\circ}]_r \Big] < +\infty,
	$$
	both thanks to Condition \eqref{eq: new ass for local}.
	Therefore, it follows from the dominated convergence theorem that \eqref{eq:interm_limit} holds, and we hence conclude the proof.
	\end{proof}

\section{A verification theorem for a class of McKean-Vlasov optimal control problems}
\label{sec:Verification}

	Let $\Om^0 = \Om^1 := \Cc([0,T], \R^d)$ be the canonical spaces of $\R^d$-valued continuous paths on $[0,T]$,
	where the canonical process on $\Om^0$ is denoted by $X^0$, and the one on $\Om^1$ is denoted by $W$.
	Under the uniform convergence topology, we define $\Fc^0 := \Bc(\Om^0)$ and $\Fc^1 := \Bc(\Om^1)$ as the Borel $\sigma$-field of respectively $\Om^0$ and $\Om^1$.
	On $\Om^0$ (resp. $\Om^1$), we define $\F^0$ (resp. $\F^1$) as  {the} canonical filtration generated by $X^0$ (resp. $W$),
	and equip $(\Om^0, \Fc^0)$ (resp. $(\Om^1, \Fc^1)$) with the Wiener measure $\P^0_0$ (resp. $\P^1_0$).
	Let $ {\Ur}$  be a bounded Borel subset of $\R^d$, and $\Uc^0$ denote the collection of all $\F^0$-progressively measurable process $\nu: [0, T] \x \Om^0 \longrightarrow {\Ur}$.
	Then, for each initial condition $(t, \xb^0) \in [0,T] \x \Om^0$, we consider a collection $\Pc^0_W(t, \xb^0)$ of probability measures on $\Om^0$:
	\begin{align*}
		\Pc^0_W(t, \xb^0) := \Big\{
			&\P^0 \in \Pc(\Om^0) ~:
			~X^0_s = \xb^0_t+\int_t^s \nu^{\P^0}_r dr + \int_t^s dW^{\P^0}_r, ~s \in [t,T], ~\P^0\mbox{-a.s.}, \\
			&~~~\P^0[X^0_{t\wedge \cdot} = \xb^0_{t \wedge \cdot} ] = 1,~\mbox{where}~ \nu^{\P^0} \in \Uc^0,~W^{\P^0} ~\mbox{is a}~(\P^0, \F^0) \mbox{-Brownian motion}
		\Big\}.
	\end{align*}
	Next, let $\Om := \Om^0 \x \Om^1$, $\Fc := \Fc^0 \otimes \Fc^1 = \Bc(\Om)$, and
	$$
		\Pc_W(t, \xb^0)
		~:=~
		\big\{ \P = \P^0 \x \P^1_0 ~: \P^0 \in \Pc^0_W(t, \xb^0) \big\}.
	$$	
   We also consider the canonical space $(\R^d, \Bc(\R^d))$, equipped with the canonical element $\xi$.
	
	\vspace{0.5em}
	
	We are given a bounded\footnote{As usual, boundedness could be replaced by a suitable growth condition, see for instance \cite[Assumption 2.8]{Djete DPP}.} measurable coefficient  $(\sigma,\sigma_{{0}}) : \R_+ \x \R^d \x \Pc_2(\R^d) \longrightarrow  \mathbb{M}^d \x \mathbb{M}^d$ with $\mathbb{M}^d$ denoting the collection of all $d \x d$ matrix.
	Hereafter, we assume that $(\sigma,\sigma_{{0}})(t,\cdot)$ is Lipschitz continuous, uniformly in $t\le T$.
	Then, for all $t \in [0,T]$, $\mu \in \Pc_2(\R^d)$ and $\P \in \Pc_W(t, \xb^0)$,
	on the space $\R^d \x \Om$ with canonical element $(\xi, X^0, W)$,
	we consider the McKean-Vlasov SDE
	$$
		X_s^{t,\mu,\P}
		=
		\xi + \int_t^s \sigma_0(r, X_r^{t,\mu,\P}, \rho^{t, \mu, \P}_r )dX^0_r +  \int_t^s \sigma(r,X_r^{t,\mu, \P}, \rho^{t, \mu, \P}_r ) dW_r,
		~\mu \x \P \mbox{-a.s.},
	$$
	with $\rho^{t, \mu, \P}_r = \Lc^{\mu \x \P} (X_r^{t,\mu,\P} | \Fc^{X^0}_r )$, where $\Fc^{X^0}_r := \sigma(X^0_{s} ~:{s} \in [0,r])$.
	Notice that the above McKean-Vlasov SDE has a unique strong solution ({see e.g.~\cite[Appendix A]{Djete DPP}}).
	
	\vspace{0.5em}

	Finally, let us define an enlarged canonical space
	$$
		\Omb := \Om \x \Cc([0,T], \R^d) \x \Cc([0,T], \Pc_2(\R^d)),
	$$
	with canonical process $(X^0, W, X, \rho)$, and
	$$
		\Pcb_W(t, \mu)
		~:=~ \Big\{
			\Pb := (\mu \x \P) \circ \big( X^0, W, X^{t, \mu, \P}, \rho^{t, \mu, \P} \big)^{-1}
			~:
			\P \in \Pc_W(t, \xb^0), ~\xb^0 \in \Om^0
		\Big\}.
	$$
   We denote by $\Fb^{X^0} = (\Fcb^{X^0}_t)_{0 \le t \le T}$ the filtration generated by $X^0$ on the enlarged canonical space,
   and denote by $\Pb_0$ the probability measure on $\Omb$ under which the canonical process $X^0$ is a Brownian motion.
	Notice that, for $\overline \P = \mu \times \P^0 \times \P^1_0 \in  {\Pcb_W}(t, \mu)$, the part $\P^1_0$ is the fixed Wiener measure,
	and $\P^0$ belongs to $\Pc^0_W(t, \xb^0)$ under which the canonical process $X^0$ is a diffusion process with  drift $\nu^{\P^0}$.
	By abuse of notation, we denote by $\nu^{\Pb}$ the corresponding drift process of $X^0$ on $\Omb$
   and by $W^{\Pb}$ the corresponding $(\P^0,\F^0)$-Brownian motion part of $X^0$ on $\Omb$
   , i.e.
	$$
		X^0_s = X^0_t + \int_t^s \nu^{\Pb}_r dr + \int_t^s dW^{\Pb}_r, ~s \in [t, T], ~\Pb \mbox{-a.s.}
	$$
	
	\begin{remark}
A probability measure $\Pb \in \Pcb_W(t, \mu)$ describes the distribution of  {the} controlled McKean-Vlasov process with initial distribution $\mu$ at initial time $t$, and control $\nu^{\Pb}$.
		Since controls take values in the bounded set $\Ur$, given a fixed $\mu_0 \in \Pc_2(\R^d)$,
		all the probability measures in the set $\Pcb_W(t, \mu_0)$ are equivalent by Girsanov's theorem.
	\end{remark}
	
	Let $L: [0,T] \x \Pc_2(\R^d) \x {\Ur} \longrightarrow \R$ and $g:  \Pc_2(\R^d) \longrightarrow \R$ be  {measurable} functions,
	 {to which we associate} the value function of the McKean-Vlasov control problem  {through}
	\begin{equation} \label{eq:def_V_valuefunction}
		V(t, \mu) ~:= \sup_{\Pb \in \Pcb_W(t,\mu)} J\left(t, \Pb \right),
		~~\mbox{with}~
		J(t,\Pb) := \E^{\Pb} \Big[\int_{t}^{T} L \big(s, \rho_s, \nu_s^{\Pb} \big)ds + {g}(\rho_T) \Big].
	\end{equation}
	{Here again, we assume for simplicity that $L$ and $g$ are bounded.}

	\vspace{0.5em}

	Given a probability measure $\mu \in \Pc(\R^d)$, and (measurable) functions $\varphi: \R^d \longrightarrow \R$ and $\psi: \R^d \x \R^d \longrightarrow \R$,
	we denote (whenever the integrals are well-defined)
	$$
		\mu(\varphi):=\int_{\R^d} \varphi(x) \mu(d x), \quad \mu \otimes \mu(\psi):=\int_{\R^d \times \R^d} \psi\left(x, x^{\prime}\right) \mu(d x) \mu\left(d x^{\prime}\right) .
	$$
	Let us also define
	\begin{align*}
		\Kc :=   \Big\{\phi: [0,T] \x \Pc_2(\R^d) \x \R^d \rightarrow \R^d
		~:~
		 \phi~\mbox{is bounded and Borel measurable} \Big\} .
	\end{align*}

	\begin{remark}
		The above McKean-Vlasov optimal control problem satisfies the following dynamic programming principle:
		for $(t,\mu) \in [0,T] \x \Pc_2(\R^d)$ and any $\Fb^{X^0}$-stopping time $\tau$ taking values in $[t, T]$, one has
		\begin{align*}
			V(t,\mu)
			~ = ~ & \sup_{\Pb \in \Pcb_{ {W}}(t,\mu)}
			\E^{\Pb}\left[\int_{t}^{\tau}L(s,\rho_s,\nu_s^{\Pb})ds + V(\tau, \rho_{\tau})\right].
		\end{align*}
		Under general conditions, the value function $V$ can be proved  {to be} the unique (viscosity) solution to the HJB master equation (see e.g. Pham and Wei \cite{Pham and Wei}):
		$$
			\left\{\begin{aligned}
			-  \partial_t V(t,\mu)
			-  \L V(t,\mu)
			- H( {t},\mu,  {D_m V})
			&   =   0, \quad(t, \mu) \in[0, T) \times \mathcal{P}_2(\mathbb{R}^d),  \\
			V(T, \mu)
			&   =   g(\mu), \quad \mu \in \mathcal{P}_2(\mathbb{R}^d),
			\end{aligned}\right.
		$$
		where $\L $ is the operator  {defined for} $\phi \in C^2( \Pc_2(\R^d))$  {as}
		\begin{align*}
			\L {\phi}(t,\mu)
			~:=~&
			\mu \Big(
                               \operatorname{tr}(\partial_x D_m \phi(\mu,x)
                               (\sigma\sigma^{\operatorname{T}}+\sigma_0\sigma_0^{\operatorname{T}})(t,x,\mu)) \Big) \\
                               &+~ \mu \otimes \mu \Big(\frac{1}{2} \operatorname{tr}(D_m^{2} \phi(\mu)(x,x^{\prime})\sigma_0(t,x,\mu)\sigma_0^{\operatorname{T}}(t,x^{\prime},\mu)) \Big),
		\end{align*}
		and
       $H: [0,T] \x \Pc_2(\R^d) \x \Kc \longrightarrow \R$
       is the Hamiltonian defined by
		\begin{equation}\label{Hamiltonian}
			H(t,\mu, p)
			~:=~
			\sup_{{u} \in {\Ur}}\left(L(t,\mu,{u}) + {u} \mu \left(\sigma_0(t,\cdot,\mu) p(t, \mu,\cdot)\right)\right).
		\end{equation}
	\end{remark}

\vspace{0.5em}

	 {From now on we} fix an initial distribution $m_0 \in \Pc_2(\R^d)$, and study the optimal control problem $V(0, m_0)$. Our verification argument {goes together with a} duality result, \eqref{eq: dualite} below, in which the   two dual  problems are defined as:
	\begin{align*}
		D_1 := \inf \Big\{
			v_0 \in \R :~
			& v_0 \!+\!\! \int_{0}^{T} \!\! \rho_t \left(\sigma_0(t,\cdot,\rho_t)\phi(t, \rho_t,\cdot)\right) dX_t^0 \\
			&\ge~
			{g}(\rho_T)
			+\! \int_{0}^{T} \!\!\! \Big( L(t,\rho_t,\nu_t^{\Pb}) + \nu_t^{\Pb}\rho_t(\sigma_0(t,\cdot,\rho_t)\phi(t, \rho_t,\cdot)) \Big) dt, \\
			&~~~~~~~~~~~~~~~~~
			\Pb~\mbox{-a.s. for all}~ \Pb \in \Pcb_W(0, m_0), ~\mbox{for some}~\phi \in \Kc \Big\},
	\end{align*}
	and
	\begin{align*}
		D_2 :=\inf \Big\{
			v_0 \in \R :
			~\exists \phi \in \Kc ~\mbox{s.t.}~
			& v_0 +
			\!\! \int_{0}^{T}\!\!\!
			\rho_t \left(\sigma_0(t,\cdot,\rho_t)\phi(t, \rho_t,\cdot)\right) dX_t^0 \\
			& \ge~
			{g}(\rho_T)
			+
			\!\! \int_{0}^{T}\!\!\!
			 H(t,\rho_t,\phi)dt,
			~\Pb_0 \mbox{-a.s.}  \Big\}.
       \end{align*}

	Similar to \eqref{eq:def_C11}, we say that a function
	$F : [0,T] \x \Pc_2(\R^d) \longrightarrow \R$
	belongs to $C^{0,1}([0,T) \x \Pc_2(\R^d))$ if
	$F$ and $D_m F$
	are both (jointly) continuous on $[0,T) \x \Pc_2(\R^d)$, where $D_m F: [0,T) \x \Pc_2(\R^d) \x \R^d \longrightarrow \R^d$ is the partial derivative in the sense that $(m,x) \mapsto D_m F(t,m,x)$ is the derivative of $m \mapsto F(t,m)$ as defined in \eqref{eq:def_DmF}.

	\begin{theorem}\label{thm:Theorem1}
		Assume that the value function $V$ is continuous on $[0,T] \x \Pc_2(\R^d)$, belongs to  $C^{0,1}([0,T) \x \Pc_2(\R^d))$
		and that $D_m V$ is uniformly bounded {on $[0,T) \x \Pc_2(\R^d)$}.
		\\
		 {(i)} Then, one has the duality
		\begin{align}\label{eq: dualite}
			V(0, m_0) = D_1 = D_2.
		\end{align}
		 {(ii)} {Assume that} $\hat{\rm u}: [0,T) \x \Pc_2(\R^d) \longrightarrow {\Ur}$ is a Borel measurable function such that, for all $(s,\mu) \in [0,T)\x \Pc_2(\R^d)$,
		\begin{equation} \label{eq:H_optimal}
			H(s,\mu,D_m V) = L(s,\mu, \hat{\rm u}(s,\mu)) +
			\hat{\rm u}(s,\mu) \mu\left(\sigma_0(s,\cdot,\mu)D_m V(s, \mu,\cdot)\right).
		\end{equation}
		Then, there exists $\Ph \in \Pcb_W(0, {m_0})$ such that ${\nu^{\Ph} = \hat {\rm u}(\cdot, \rho)}$, $d\Ph \x dt$ a.e.,
		and $\Ph$ is an optimal solution to the control problem $V(0, m_0)$, i.e.
		$$
			J(0, \Ph)
			~=~
			V(0, m_0)
			~=~
			\sup_{\Pb \in \Pcb_W(0, m_0)} J(0, \Pb),
		$$
		and
		$$
			V(0, m_0)
			+
			\int_{0}^{T}
			\rho_t\left(\sigma_0(t,\cdot,\rho_t)D_m V(t, \rho_t,\cdot)\right) dX^0_t
			=
			g(\rho_T)
			+ \int_{0}^{T} H(t,\rho_t,D_m V)dt,
			~\hat{\P} \mbox{-a.s.}
		$$
		
	\end{theorem}
	
	Before to provide the proof of the above, let us comment these results.
	
	\begin{remark}
		$\mathrm{(i)}$ In Theorem \ref{thm:Theorem1}, we assume that $D_m V$ is bounded for simplicity in order to apply   Theorem \ref{thm:ItoC1}.
		The same results still hold  if this boundedness condition is replaced by a local integrability condition as in Assumption \ref{assum:second}.
	
		\vspace{0.5em}
		
		\noindent $\mathrm{(ii)}$ When ${\Ur}$ is compact, and $L$ is upper semi-continuous,
		 one can choose a Borel version of the optimizer $\hat{\rm u}$ as required in Theorem \ref{thm:Theorem1}, {see e.g.~\cite[Proposition 7.33, p.153]{BertsekasShreve.78}.}
		
		\vspace{0.5em}
		
		\noindent $\mathrm{(iii)}$ The duality result \eqref{eq: dualite} is in the spirit of  duality results in mathematical finance and optimal transport, see e.g. \cite{Bouchard Dang} for an abstract formulation in optimal control.
		The novelty here is that it {goes together with} the proof of the verification argument and appeals directly to a functional class of controls, which is made possible because we know a priori that $D_{m}V$ is well-defined so that we can identify the optimal control by means of our It\^{o}'s formula for $C^{1}$-functionals.
		
		\vspace{0.5em}
		
		\noindent $\mathrm{(iv)}$
		In the literature of mean-field control or mean-field games, it is usually difficult to check that the value function is $C^2$ (see e.g.~ \cite{Lions, GMMZ} for examples).
		The $C^1$-regularity as required in the above will  be clearly easier to prove.
		We provide in Example \ref{exam:C1} below an example of McKean-Vlasov control problem, in which the $C^1$-regularity can be obtained by using purely probabilistic arguments.

		\vspace{0.5em}
		
		\noindent $\mathrm{(v)}$
		The control problem in \eqref{eq:def_V_valuefunction} is a pure McKean-Vlasov control problem, so that the value function is in the form $V(t, \rho_t)$.
		One could also study  mixed control problems by considering a controlled diffusion process $Y$ in addition to $X$, and a reward function in the form $g(\rho^X_T, Y_T)$, so that the value of the control problem would be in the form $V(t, \rho^X_t, Y_t)$.
		The same arguments as below would lead to a similar verification result.
		We  stay in this pure McKean-Vlasov control setting for ease of presentation.
	\end{remark}

	\begin{remark}
		The reason for considering a weak formulation of the McKean-Vlasov control problem in \eqref{eq:def_V_valuefunction} comes from our duality type arguments which do not apply to strong formulations.
		On the other hand, the  law induced by a strong control is a weak control, as a probability measure in $\Pcb_{{W}}(0, m_0)$.
		Then, given a strong control $\nu$ which achieves the optimality in the Hamiltonian as in \eqref{eq:H_optimal},
		we obtain an optimal control for the weak formulation \eqref{eq:def_V_valuefunction} by Theorem \ref{thm:Theorem1},
		and hence it is also an optimal control for the (more restrictive) strong formulation.
		{More generally speaking}, by a direct adaptation of the arguments in Djete, Possama\"i and Tan \cite[Section 4, Proof of Theorem 3.1]{Djete Equivalence},
		one can prove that, under quite general upper-semicontinuity conditions on $L$ and $g$,
		the value functions of the weak and strong formulations   are the same, even if an optimal strong control does not exist.
	\end{remark}
	
	\begin{proof}[Proof of Theorem \ref{thm:Theorem1}.]
	$\mathrm{(i)}$ Let $(v_0, \phi) \in \R \x \Kc$ be a couple satisfying the inequality in the definition of $D_1$, i.e.
	$$
		v_0 \!+\!\! \int_{0}^{T} \!\! \rho_t \left(\sigma_0(t,\cdot,\rho_t)\phi(t, \rho_t,\cdot)\right) dX_t^0
		\ge
		{g}(\rho_T)
		+\! \int_{0}^{T} \!\!\! \left[ L(t,\rho_t,\nu_t^{\Pb})+ \nu_t^{\Pb} \rho_t(\sigma_0(t,\cdot,\rho_t)\phi(t, \rho_t,\cdot))\right] dt,
		~\Pb~\mbox{-a.s.}
	$$
	for all $\Pb \in \Pc_W(0, m_0)$.
	Taking expectation under $\Pb$ on both sides of the above inequality, it leads to
	$$
		v_0 ~\ge~
		\E^{\Pb} \Big[
			g(\rho_T)
			+
			\int_0^T L(t,\rho_t,\nu_t^{\Pb}) dt
		\Big],
		~\mbox{for all}~ \Pb \in \Pcb_W(0, m_0).
	$$
	Further, by the definition of $H$, we notice that, for all $\Pb \in \Pcb_W(0, m_0)$,
	$$
		L(t,\rho_t,\nu_t^{\Pb}) + \nu_t^{\Pb} \rho_t(\sigma_0(t,\cdot,\rho_t)\phi(t, \rho_t,\cdot))
		~\le~
		 H(t,\rho_t,\phi),
		 ~t\in[0,T].
	$$
	This proves that
	$$
		V(0, m_0) ~\le~ D_1 ~\le~ D_2.
	$$
	
	 {It remains to} prove that $V(0, m_0) \ge D_2$.
	For each $\Pb \in \Pcb_W(0, m_0)$, let us introduce the process $S^{\Pb} = \left(S_t^{\Pb}\right)_{0 \le t \le T}$ by
	$$
		S_t^{\Pb} := V(t,\rho_t) + \int_{0}^{t} L(s,\rho_s,\nu_s^{\Pb}) ds.
	$$	
	For all $0 \le t \le t+h \le T$,  the dynamic programming principle (see e.g. \cite[Theorem 3.2]{Djete DPP})  {implies that}
	\begin{align}
		S_t^{\Pb}(\om)
		~=~&
		\sup_{\tilde{\P} \in  {\bar\Pc}_W(t,\rho_t(\om))}
		\E^{\tilde{\P}}\left[\int_{t}^{t+h}L(s,\rho_s,\nu_s^{\tilde{\P}}) ds
			+ V(t+h, \rho_{t+h})
		\right]
		+
		\int_{0}^{t} L(s,\rho_s,\nu_s^{\Pb})(\om)ds
		\nonumber \\
		~\ge~ &
		\E^{\Pb}\left[\int_{t}^{t+h} L(s,\rho_s,\nu_s^{\Pb}) ds
                   + V(t+h, \rho_{t+h})
                   ~\Big| \Fcb^{X^0}_t \right] (\om)
                   + \int_{0}^{t} L(s,\rho_s,\nu_s^{\Pb})(\om)ds
                   \nonumber \\
		~ = ~ & \E^{\Pb} \left[S_{t+h}^{\Pb}~\Big| \Fcb^{X^0}_t \right](\om),
		\label{supermartingale}
	\end{align}
	 {for $\Pb$-a.e.~$\omega$.}
	In other words, $S^{\Pb}$ is a $(\Pb, \Fb^{X^0})$-supermartingale for all $\Pb \in \Pcb_W(0, m_0)$.
	Therefore, for each $\Pb \in \Pcb_W(0, m_0)$, one can apply the Doob-Meyer decomposition to obtain a unique $\Fb^{X^0}$-predictable non-decreasing process $A^{\Pb}$ and a $(\Pb, \Fb^{X^0})$-martingale $M^{\Pb}$ such that $A^{\Pb}_0 = M^{\Pb}_0 = 0$ and
	$$
		V(t,\rho_t) + \int_{0}^{t} L(s,\rho_s,\nu_s^{\Pb})ds =
		V(0, {m_{0}}) + M_t^{\Pb} - A_t^{\Pb},
		~t \in [0,T],~\Pb \mbox{-a.s.}
	$$
	At the same time, as $V \in C^{0,1}$, we apply our $C^1$-It\^o's formula in Theorem \ref{thm:ItoC1} to deduce another unique decomposition
	$$
		V(t,\rho_t)
		=
		V(0, m_0)
		+
		\int_{0}^{t}\rho_s \big(\sigma_0(s,\cdot,\rho_s) D_m V(s, \rho_s,\cdot) \big) dW_s^{\Pb} + \Gamma_t^{\Pb}, ~\Pb \mbox{-a.s.},
	$$
	where $(\Gamma_t^{ {\Pb}})_{0 \le t \le T}$ is an orthogonal process.
	The above two decompositions are unique, so that the two martingale parts should be the same.
	It follows  {that}
	$$
		V(t,\rho_t) + \int_{0}^{t} L(s,\rho_s,\nu_s^{\Pb})ds =
		V(0, m_0) + \int_{0}^{t}\rho_s\left(\sigma_0(s,\cdot,\rho_s)D_m V(s, \rho_s,\cdot)\right) dW_s^{\Pb} - A_t^{\Pb},
		~ \Pb\mbox{-a.s.}
	$$
       As $V(T, \rho_T) = g(\rho_T)$ and $A^{\Pb}$ is non-decreasing, this implies
	\begin{align}\label{eq:C1 and Doob}
		& V(0,m_0) + \int_{0}^{T}\rho_t \left(\sigma_0(t,\cdot,\rho_t)D_m V(t, \rho_t,\cdot)\right) dX^0_t \nonumber \\
		\ge~ &
		{g}(\rho_T)
		+
		\int_{0}^{T}
		\Big(
			\rho_t \left(\sigma_0(t,\cdot,\rho_t)D_m V(t, \rho_t,\cdot)\right) \nu_t^{\Pb}
			+ L(t,\rho_t,\nu_t^{\Pb})
		\Big) dt .
	\end{align}
	For each $\eps > 0$, one can then
	apply the measurable selection arguments, e.g.~combine \cite[Proposition 2.21]{Karoui Tan} with \cite[Lemma 7.27, p.173]{{BertsekasShreve.78}},
	to obtain $\nu^{\eps} \in \Uc^0$ such that
	$$
		L(s, \rho_s,\nu^{\eps}_s) + \nu^{\eps}_s \rho_s(\sigma_0(s,\cdot,\rho_s){D_{m}V}(s, \rho_s,\cdot))
		~\ge~
		H(s,\rho_s,\phi) - \eps,
		~d\Pb_0 \x dt \mbox{-a.e.}
	$$
	One can then construct a probability measure $\Pb^{\eps}$ such that $\nu^{\Pb^{\eps}} = \nu^{\eps}$.  {As all probability measures in $\Pcb_W(0, m_0)$ are equivalent,}
	  together with \eqref{eq:C1 and Doob},  {this shows} that
	\begin{align*}
		& T \eps + V(0,m_0)
		+
		\int_{0}^{T}\rho_t \left(\sigma_0(t,\cdot,\rho_t)D_m V(t, \rho_t,\cdot)\right) dX^0_t
		\ge~
		{g}(\rho_T)
		+
		\int_{0}^{T}
		H(t,\rho_t, D_m V ) dt ,
		~\Pb_0\mbox{-a.s.}
	\end{align*}
	 {By arbitrariness of} $\eps > 0$,   $V(0, m_0) \ge D_2$.
	
	\vspace{0.5em}
	
	\noindent $\mathrm{(ii)}$  {Assume now that there exists a Borel measurable map} $\hat{\ur}: [0,T) \x \Pc_2(\R^d) \longrightarrow {\Ur}$ such that
	$$
		H(s,\rho_s,D_m V) = L(s,\rho_s, \hat{{\ur}}(s,\rho_s)) + \hat{{\ur}}(s,\rho_s) \rho_s\left(\sigma_0(s,\cdot,\rho_s)D_m V(s, \rho_s,\cdot)\right), ~ s \in [0,T].
	$$
	We can then construct a probability measure $\Ph$ such that $(\nu^{\Ph}_t)_{0 \le t < T} = ({\hat\ur} (t, \rho_t))_{0 \le t < T}$, $\Ph$-a.s.
	To show that $\Ph$ is an optimal solution to the control problem $V(0, m_0)$,  {we appeal to} Lemma \ref{McK-Vlas equality} below  {to deduce that}
	\begin{equation} \label{eq:cliam_eq_H_MKV}
		V(0, m_0)
		 +
		\int_{0}^{T}
		\rho_t\left(\sigma_0(t,\cdot,\rho_t)D_m V(t, \rho_t,\cdot)\right) dX^0_t
		=
		g(\rho_T)
               + \int_{0}^{T} H(t,\rho_t,D_m V)dt,
               ~\hat{\P} \mbox{-a.s.}
	\end{equation}
	We can then compute directly that
	\begin{align*}
		V(0, m_0)
		~ = ~ &
		g(\rho_T)
		+ \int_{0}^{T} H(t,\rho_t,D_m V)dt
		- \int_{0}^{T}
		\rho_t\left(\sigma_0(t,\cdot,\rho_t)D_m V(t, \rho_t,\cdot)\right) dX^0_t \\
		~ = ~ &
		g(\rho_T) +
		\int_{0}^{T} L(s,\rho_s, {\hat\ur}(s,\rho_s))ds -
		\int_0^T
		\rho_t\left(\sigma_0(t,\cdot,\rho_t)D_m V(t, \rho_t,\cdot)\right) dW_t^{\hat{\P}}.
	\end{align*}
	Taking  expectation on both sides under $\Ph$, it follows that
	\begin{align*}
		V(0, m_0)
		~=~
		\E^{\Ph} \left[g(\rho_T) + \int_{0}^{T} L(s,\rho_s,  {\hat\ur}(s,\rho_s)) ds\right]
		~=~
		J(0, \Ph),
	\end{align*}
	i.e. $\Ph$ is an optimal solution to the control problem $V(0, m_0)$.
\end{proof}

	\begin{lemma}\label{McK-Vlas equality}
		In the setting of Theorem $\ref{thm:Theorem1}$  {(ii)}, we have
		\begin{equation*}
			V(0,m_0) + \int_0^T \rho_t\left(\sigma_0(t,\cdot,\rho_t)D_m V(t, \rho_t,\cdot)\right) dX^0_t
			~=~
			g(\rho_T) + \int_0^T H \big(t, \rho_t, D_m V \big) dt,
			~\Pb_0\mbox{-a.s.}
		\end{equation*}
	\end{lemma}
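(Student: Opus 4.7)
The plan is to refine the inequality \eqref{eq:C1 and Doob} into an \emph{equality} (by retaining the Doob--Meyer residual $A^{\Pb}$) applied to $\Pb = \Ph$, and then to show that this residual actually vanishes. Going back to the derivation of \eqref{eq:C1 and Doob} in the proof of Theorem \ref{thm:Theorem1}(i), before invoking $A^{\Pb}_T \ge 0$, one has
\[
V(0,m_0) + \int_0^T \rho_s(\sigma_0 D_m V) dX^0_s = g(\rho_T) + \int_0^T \bigl( L(s,\rho_s,\nu_s^{\Pb}) + \nu_s^{\Pb} \rho_s(\sigma_0 D_m V) \bigr) ds + A^{\Pb}_T, \quad \Pb\mbox{-a.s.},
\]
for every $\Pb \in \Pcb_W(0,m_0)$. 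Specialising to $\Pb = \Ph$, the choice $\nu^{\Ph} = \hat{\ur}(\cdot,\rho)$ and the defining identity $L(s,\rho_s,\hat{\ur}) + \hat{\ur} \rho_s(\sigma_0 D_m V) = H(s,\rho_s,D_m V)$ reduce the lemma to showing $A^{\Ph}_T = 0$, $\Ph$-a.s.\ (equivalently $\Pb_0$-a.s., since all elements of $\Pcb_W(0,m_0)$ are equivalent by Girsanov).

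To prove this vanishing, I combine two consequences of the refined identity. On one hand, subtracting the identity for a general $\Pb$ from the one for $\Ph$ gives the pathwise comparison
\[
0 \le A^{\Ph}_T \le A^{\Pb}_T = A^{\Ph}_T + \int_0^T a^{\Pb}_s ds, \quad \Pb_0\mbox{-a.s.},
\]
with $a^{\Pb}_s := H(s,\rho_s,D_m V) - L(s,\rho_s,\nu^{\Pb}_s) - \nu^{\Pb}_s \rho_s(\sigma_0 D_m V) \ge 0$. On the other hand, taking $\Pb$-expectation of the identity and using that $\int \rho_s(\sigma_0 D_m V) dW^{\Pb}_s$ is a $\Pb$-martingale (by boundedness of $D_m V$ and $\sigma_0$) yields the key bookkeeping relation $\E^{\Pb}[A^{\Pb}_T] = V(0,m_0) - J(0,\Pb)$. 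Consequently, for any primal $\eps$-optimal $\Pb^{\eps}$ (i.e.\ $J(0,\Pb^{\eps}) \ge V(0,m_0) - \eps$, which exists since $V(0,m_0) = \sup_\Pb J(0,\Pb)$), one obtains
\[
\E^{\Pb^{\eps}}[A^{\Ph}_T] \le \E^{\Pb^{\eps}}[A^{\Pb^{\eps}}_T] = V(0,m_0) - J(0,\Pb^{\eps}) \le \eps.
\]

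The main obstacle is to upgrade this expectation bound into the pointwise statement $A^{\Ph}_T = 0$, $\Pb_0$-a.s. My plan is to arrange that $\Pb^{\eps} \longrightarrow \Ph$ in total variation, which boils down to the convergence of the Girsanov densities $Z^{\eps} := d\Pb^{\eps}/d\Pb_0 \longrightarrow Z^{\Ph} := d\Ph/d\Pb_0$ in $L^1(\Pb_0)$. This can be done by selecting $\nu^{\eps}$ to simultaneously satisfy the dual $\eps$-optimality $L(\nu^{\eps}) + \nu^{\eps} \rho(\sigma_0 D_m V) \ge H - \eps$ used in the proof of Part~(i) and to converge to $\hat{\ur}(\cdot,\rho)$ along a subsequence (relying on the Borel measurable-selection argument \cite[Prop.\ 7.33]{BertsekasShreve.78} and the fact that $\hat{\ur}$ realises the pointwise maximum). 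Boundedness of $\Ur$ guarantees uniform $L^p$-bounds on $Z^{\eps}$, hence uniform integrability, and Fatou's lemma applied to $Z^{\eps} A^{\Ph}_T \ge 0$ produces $\E^{\Pb_0}[Z^{\Ph} A^{\Ph}_T] \le \liminf_{\eps \to 0} \E^{\Pb^{\eps}}[A^{\Ph}_T] = 0$. Since $Z^{\Ph} > 0$ and $A^{\Ph}_T \ge 0$ $\Pb_0$-a.s., this forces $A^{\Ph}_T = 0$, $\Pb_0$-a.s., completing the proof.
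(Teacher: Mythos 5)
Your setup is identical to the paper's: you refine \eqref{eq:C1 and Doob} to an equality with the Doob--Meyer residual $A^{\Pb}$, specialise to $\Ph$ so that the Hamiltonian appears, note the bookkeeping relation $\E^{\Pb}[A^{\Pb}_T]=V(0,m_0)-J(0,\Pb)$ (so $\inf_{\Pb}\E^{\Pb}[A^{\Pb}_T]=0$), and use the pathwise comparison $0\le A^{\Ph}_T\le A^{\Pb}_T$ to deduce $\inf_{\Pb}\E^{\Pb}[A^{\Ph}_T]=0$. Up to this point you and the paper agree.

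The gap is in the last step, where you try to upgrade $\inf_{\Pb}\E^{\Pb}[A^{\Ph}_T]=0$ to $A^{\Ph}_T=0$ $\Pb_0$-a.s.\ by arranging primal $\eps$-optimizers $\Pb^{\eps}\to\Ph$ in total variation. That is circular. To invoke $\E^{\Pb^{\eps}}[A^{\Ph}_T]\le\eps$ you need $\Pb^{\eps}$ to be primal $\eps$-optimal, i.e.\ $J(0,\Pb^{\eps})\ge V(0,m_0)-\eps$, a global statement. The selection you propose only guarantees the \emph{dual} $\eps$-optimality $L(\nu^{\eps})+\nu^{\eps}\rho(\sigma_0 D_m V)\ge H-\eps$, which gives $A^{\Pb^{\eps}}_T\le A^{\Ph}_T+T\eps$ pathwise but says nothing about $\E^{\Pb^{\eps}}[A^{\Pb^{\eps}}_T]$ being small: taking $\Pb^{\eps}$-expectations just yields $\E^{\Pb^{\eps}}[A^{\Pb^{\eps}}_T]\le\E^{\Pb^{\eps}}[A^{\Ph}_T]+T\eps$, where $\E^{\Pb^{\eps}}[A^{\Ph}_T]$ is exactly the unknown quantity. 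Conversely, if you insist on genuine primal $\eps$-optimality, there is no reason the optimizing sequence should converge to $\Ph$ at all unless $\Ph$ is already known to be optimal --- which is the conclusion of Theorem \ref{thm:Theorem1}(ii), for which this lemma is a prerequisite.

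The paper sidesteps the need to track \emph{where} the near-optimizers sit by using the reverse H\"older inequality: since $\E^{\Pb_0}\bigl[(d\Pb/d\Pb_0)^{-1}\bigr]\le C:=\exp(T\bar{u}^2)$ uniformly over $\Pb\in\Pcb_W(0,m_0)$ (boundedness of $\Ur$), Cauchy--Schwarz gives
$\E^{\Pb}[A^{\Ph}_T]\ge\E^{\Pb_0}[(A^{\Ph}_T)^{1/2}]^2/C$ for every $\Pb$. Taking the infimum over $\Pb$, the left-hand side is $0$, which forces $\E^{\Pb_0}[(A^{\Ph}_T)^{1/2}]=0$, hence $A^{\Ph}_T=0$ $\Pb_0$-a.s. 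This replaces the fragile total-variation convergence step in your plan by a uniform-in-$\Pb$ lower bound, and is the missing ingredient.
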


   \begin{proof}
   From the definition of the process $(S^{\Pb}_t)_{0 \le t \le T}$ and its decomposition given in the proof of Theorem \ref{thm:Theorem1}, we have
   \begin{align*}
           \sup_{\Pb \in \Pcb_W(0,m_0)} \E^{\Pb} [S^{\Pb}_T]
       &   ~=~ \sup_{\Pb \in \Pcb_W(0,m_0)} \E^{\Pb}\Big[g(\rho_T) + \int_{0}^{T} L(t,\rho_t,\nu_t^{\Pb})dt\Big] \\
       &   ~=~
		V(0,m_0) - \inf_{\Pb \in \Pcb_W(0,m_0)} \E^{\Pb}[A_{ {T}}^{\Pb}], ~ {\Pb_{0}} \mbox{-a.s.}
   \end{align*}
   which implies that $\inf_{\Pb \in \Pcb_W(0,m_0)} \E^{\Pb}[A_{ {T}}^{\Pb}] = 0$.
    {Moreover, for $\Pb \in \Pcb_W(0,m_0)$} and $\Ph \in \Pcb_W(0 ,m_0)$ such that $\nu^{\Ph} =  {\hat\ur}(\cdot, \rho)$, $d\Pb \x dt$-a.e., we have
   \begin{align}\label{eq:McV optimal equality}
       V(0,m_0)  ~=~ &
                  g(\rho_T) + \int_{0}^{T}
                  \left[L(t,\rho_t,\nu^{\Pb}_t) + \nu^{\Pb}\rho_t\left(\sigma_0(t,\cdot,\rho_t)D_m V(t, \rho_t,\cdot)\right) \right] dt
                  + A^{\Pb}_T \nonumber \\
                & - \int_{0}^{T}\rho_t\left(\sigma_0(t,\cdot,\rho_t)D_m V(t, \rho_t,\cdot)\right) dX^0_t
                   \nonumber\\
                  ~=~ &
                  g(\rho_T) + \int_{0}^{T}
                  \left[L(t,\rho_t,\nu^{\Ph}_t) + \nu^{\Ph}_t \rho_t\left(\sigma_0(t,\cdot,\rho_t)D_m V(t, \rho_t,\cdot)\right)\right] dt
                  + A^{\Ph}_T \nonumber\\
                 & - \int_{0}^{T}\rho_t\left(\sigma_0(t,\cdot,\rho_t)D_m V(t, \rho_t,\cdot)\right) dX^0_t
                 \\
       ~\ge~  &   g(\rho_T) + \int_{0}^{T}
                  \left[L(t,\rho_t,\nu^{\Pb}_t) + \nu^{\Pb}_t \rho_t\left(\sigma_0(t,\cdot,\rho_t)D_m V(t, \rho_t,\cdot)\right)\right] dt
                  + A^{\Ph}_T \nonumber \\
                 & - \int_{0}^{T}\rho_t\left(\sigma_0(t,\cdot,\rho_t)D_m V(t, \rho_t,\cdot)\right) dX^0_t. \nonumber
   \end{align}
    {Combining the above} implies that $0 \le A^{\Ph}_T \le A^{\Pb}_T$ a.s.  {for $\Pb \in \Pcb_W(0,m_0)$}, and
   $$
   0 = \inf_{\Pb \in \Pcb_W(0,m_0)} \E^{\Pb}[A^{\Pb}_T]  \ge \inf_{\Pb \in \Pcb_W(0,m_0)} \E^{\Pb} [A^{\Ph}_T] = 0.
   $$
   At the same time,  {we have by the} reverse  {H\"older}'s inequality
   \begin{align*}
       \inf_{\Pb \in \Pcb_W(0,m_0)} \E^{\Pb}[A^{\Ph}_T] ~=~ &
                       \inf_{\Pb \in \Pcb_W(0,m_0)}
                       \E^{\Pb_0}\left[\frac{d\Pb}{d\Pb_0} A^{\Ph}_T \right] \\
       ~\ge~ &         \inf_{\Pb \in \Pcb_W(0,m_0)}
                       \frac{\E^{\Pb_0}\left[(A^{\Ph}_T)^{\frac{1}{2}}\right]^2}
                       {\E^{\Pb_0}\left[\left(\frac{d\Pb}{d\Pb_0}\right)^{-1}\right]} \\
       ~\ge~ &         \frac{\E^{\Pb_0}\left[(A^{\Ph}_T)^{\frac{1}{2}}\right]^2}{C},
   \end{align*}
   where $C > 0$ is a fixed constant such that $\E^{\Pb_0}[(\frac{d\Pb}{d\Pb_0})^{-1}] \le C$ for $\forall ~ \Pb \in \Pcb_W(0,m_0)$ {whose existence is justified as follows:  }
   \begin{align*}
       \left(\frac{d\Pb}{d\Pb_0}\right)^{-1} ~=~ &
                   \exp \left(\int_{0}^{T} -\nu^{\Pb}_t dW^{\Pb_0}_t
                       + \frac{1}{2}\int_{0}^{T} (\nu^{\Pb}_t)^2 dt \right) \\
       ~=~ &       \exp \left(\int_{0}^{T} -\nu^{\Pb}_t dW^{\Pb_0}_t
                       - \frac{1}{2}\int_{0}^{T} (\nu^{\Pb}_t)^2 dt \right)
                   \exp \left(\int_{0}^{T} (\nu^{\Pb}_t)^2 dt \right),
   \end{align*}
   which implies that
   $$
   \E^{\Pb_0}\left[\left(\frac{d\Pb}{d\Pb_0}\right)^{-1}\right]
   \le
   \exp\left(T\bar{{u}}^2 \right) =: C,
   $$
   {with $\bar{{u}} := \max_{{u} \in {\Ur}}|{u}|$.}
   Therefore,  {combining the above shows that} $A^{\Ph}_T = 0, \Pb_0$-a.s., and we can conclude by \eqref{eq:McV optimal equality} that
   $$
       V(0,m_0) + \int_0^T \rho_t\left(\sigma_0(t,\cdot,\rho_t)D_m V(t, \rho_t,\cdot)\right) dX^0_t
		~=~
		g(\rho_T) + \int_0^T H \big(t, \rho_t, D_m V \big) dt,
		~\Pb_0\mbox{-a.s.}
   $$
   \end{proof}

	\begin{example} \label{exam:C1}
		Let $d \ge 1$, $\sigma (\cdot) = \sigma_0(\cdot) \equiv 1$, $\Ur$ be a convex and compact subset of $\R^d$, $L(t, \mu, u) = \bar L(u)$ for some function
		$$
			\bar L: \Ur \longrightarrow \R~\mbox{, strictly concave},
		$$
		and
		$g(\mu) = \bar g \big( \mu(\phi) \big)$ with $\bar g: {\R^{d}} \longrightarrow {\R} $ (resp. $\phi: {\R} \longrightarrow {\R^{d}}$)  in $C^1_b({\R^{d}})$ (resp. $C^1_b ({\R})$).
		In this simple setting, we can re-write the value function as
		\begin{equation} \label{eq:V_reformulation}
			V(t, \mu) ~= \sup_{\P^0 \in \Pc^0_W(t,0)} \Jb (t, \mu, \P^0),
		\end{equation}
		where
		\begin{equation} \label{eq:def_Jb}
			\Jb (t, \mu, \P^0) ~:=~ \E^{\P^0} \Big[  \int_t^T \bar L \big( \nu^{\P^0}_s \big) ds + \bar g \Big( \int_{\R^d} \bar \phi_t (z + X^0_T)  \mu (dz) \Big) \Big],
		\end{equation}
		with
		$$
			\bar \phi_t (y ) ~:=~  \E^{\P^1_0} \big[ \phi(y + W_T - W_t) \big],
		$$
		and
		\begin{align*}
			\Pc^0_W(t, 0)
			&:=
			\Big \{ \P^0 \in \Pc(\Om^0) ~: X^0_s = \int_t^{s \vee t} \nu^{\P^0}_r dr + \int_t^s dW^{\P^0}_r, ~ s \in [0,T], ~\P^0\mbox{-a.s.} \\
			&~~~~~~~~~~~~~~~~~~~~~~~~
			\mbox{where}~\nu^{\P^0} \in \Uc^0, ~ W^{\P^0} ~\mbox{is a}~ (\P^0, \F^0) \mbox{-Brownian motion}
			\Big\}.
		\end{align*}
		For fixed $\P^0 \in \Pc^0_W(t, 0)$, it is clear that the map $\mu \longmapsto \Jb(t, \mu, \P^0)$ is differentiable and
		\begin{equation} \label{eq:formula_DmV}
			D_m \Jb(t, \mu, x, \P^0)
			~=~
			\E^{\P^0} \Big[ \bar g' \Big( \int_{\R^d} \bar \phi_t (z + X^0_T) \mu(dz) \Big) \nabla \bar \phi_t \big(x + X^0_T \big) \Big]
		\end{equation}
	{in which $ \bar g' $ is  the Jacobian of $\bar g$ and $\nabla \bar \phi_t$ is the gradient of $\bar \phi_{t}$ as a column vector.}
		Clearly, $D_m \Jb$ is continuous in all its arguments.
		
		\vspace{0.5em}
		
		Next, as $L$ depends only on $u$, we can apply Tan and Touzi \cite[Lemma 3.9]{TanTouzi} to deduce that the map
		$$
			\P^0 \longmapsto \Jb (t, \mu, \P^0)
			~\mbox{is upper semicontinuous,}
		$$
		while, since $\Ur$ is convex and compact,   $\Pc^0_W(t,0)$ is also convex and compact (for the weak convergence topology).
		Then, for every fixed $(t, \mu)$, there exists an optimizer for the optimal control problem \eqref{eq:V_reformulation}.
		
		\vspace{0.5em}
		
		We now prove that
		\begin{equation} \label{eq:Jb_concave}
			\P^0 \longmapsto \Jb(t, \mu, \P^0)
			~\mbox{is strictly concave.}
		\end{equation}
		Let $\P^0_1, ~\P^0_2 \in \Pc^0_W(t, 0)$ and $\P^0_3 := (\P^0_1 + \P^0_2) /2$.
		Following the arguments in the proof of \cite[Proposition 3.11.(ii) and Lemma 3.15 ]{TanTouzi},
		there exists an enlarged canonical space $\overline \Om^0$ (of $\Om^0 := \Cc([0,T], \R^d)$) with canonical process $(X^0, \bar \nu)$,
		together with $\Pb^0_1, \Pb^0_2 \in \Pc(\Omb^0)$ such that
		$$
			\P^0_i \circ (X^0, \nu^{\P^0_i})^{-1}
			~=~
			\Pb^0_i \circ (X^0, \bar \nu)^{-1},  ~i = 1, 2,
		$$
		so that
		$$
			\E^{\P^0_i} \Big[  \int_t^T \bar L \big( \nu^{\P^0_i}_s \big) ds \Big]
			~=~
			\E^{\Pb^0_i} \Big[  \int_t^T \bar L \big( \bar \nu_s \big) ds \Big],  ~i = 1, 2.
		$$
		Let $\Pb^0_3 := (\Pb^0_1 + \Pb^0_2) /2$ so that $\Pb^0_3 |_{\Om^0} = \P^0_3$.
		Since $U$ is convex,  one can apply the classical projection theorem (see e.g. \cite[Theorem A.3]{TanTouzi}) to deduce that
		$\P^0_3 \in \Pc^0_W(t, 0)$ and
		$$
			\nu^{\P^0_3}_s = \E^{\Pb^0_3} \big[ \bar \nu_s | \Fcb^{X^0}_s \big], ~d \P^0_3 \x d s \mbox{-a.e.},
		$$
		in which $\Fcb^{X^0}_s$ is the $\sigma$-field on the enlarged space $\Omb^0$ generated by $X^0_{s \wedge \cdot}$,
		and we consider $\E^{\Pb^0_3} \big[ \bar \nu_s | \Fcb^{X^0}_s \big]$ as a random variable in $\Om^0$. {Moreover, $\nu^{\P^0_3}_{s}=  \bar \nu_{s}$  $d \Pb^0_3 \x d s \mbox{-a.e.}$ only if $\P^0_1 = \P^0_2$.}
		Since $\bar L$ is strictly concave, when $\P^0_1 \neq \P^0_2$, it follows by Jensen's inequality that
		\begin{align*}
			\frac12 \Big( \E^{\P^0_1} \Big[  \int_t^T \bar L \big( \nu^{\P^0_1}_s \big) ds \Big] +  \E^{\P^0_2} \Big[  \int_t^T \bar L \big( \nu^{\P^0_2}_s \big) ds \Big] \Big)
			=~&
			\frac12 \Big( \E^{\Pb^0_1} \Big[  \int_t^T \bar L \big( \bar \nu_s \big) ds \Big] +  \E^{\Pb^0_2} \Big[  \int_t^T \bar L \big( \bar \nu_s \big) ds \Big] \Big)\\
			=~&
			\E^{\Pb^0_3} \Big[  \int_t^T \bar L \big( \bar \nu_s \big) ds \Big] \\
			<~&
			{\E^{\Pb^0_3} \Big[  \int_t^T \bar L \big( \nu^{\P^0_3}_s \big) ds \Big]}\\
			=~& {\E^{\P^0_3} \Big[  \int_t^T \bar L \big( \nu^{\P^0_3}_s \big) ds \Big]}.
		\end{align*}
		As $\P^0_3:= (\P^0_1 + \P^0_2) /2$, this implies that
		$$
			\P^0 \longmapsto \E^{\P^0} \Big[  \int_t^T \bar L \big( \nu^{\P^0}_s \big) ds \Big]
			~\mbox{is strictly concave}.
		$$
		Since
		$$
			\P^0 \longmapsto \E^{\P^0} \Big[ \bar g \Big( \int_{\R^d} \bar \phi_t (z + X^0_T)  \mu (dz) \Big) \Big]
			~\mbox{is linear},
		$$
		 it follows by the definition of $\Jb$ in \eqref{eq:def_Jb} that \eqref{eq:Jb_concave} holds true.
		Therefore, for every fixed $(t, \mu)$, there exists a unique optimizer $\Ph^0_{t,\mu}$ for the optimal control problem \eqref{eq:V_reformulation}.
		In particular, one immediately deduces that
		\begin{equation} \label{eq:continuity_Ph}
			(t, \mu) ~\longmapsto~ \Ph^0_{t,\mu} ~\mbox{is continuous}.
		\end{equation}

		Now, we claim that
		\begin{equation} \label{eq:DV_DJ}
			\frac{\delta V}{\delta m} (t, \mu, x)
			~=~
			\frac{\delta \Jb}{\delta m} (t, \mu, x, \Ph^0_{t,\mu}),
			~~\mbox{for all}~
			(t,\mu, x).
		\end{equation}
		Indeed, given $\mu \neq \mu'$, and with the notation $\mu_{\lambda} := \lambda \mu + (1-\lambda) \mu'$, one has
		$$
			V(t, \mu) - V(t, \mu') \le \Jb(t, \mu, \Ph^0_{t,\mu}) - \Jb (t, \mu', \Ph^0_{t, \mu})
			=
			\int_0^1 \int_{\R^d} \frac{\delta \Jb}{\delta m}  \big( t, \mu_\lambda, x, \Ph^0_{t, \mu} \big) ( \mu - \mu') (dx) d\lambda,
		$$
		and similarly
		$$
			V(t, \mu) - V(t, \mu') \ge \int_0^1 \int_{\R^d} \frac{\delta \Jb}{\delta m} \big( t, \mu_\lambda, x, \Ph^0_{t, \mu'} \big) ( \mu - \mu') (dx) d\lambda.
		$$
		Using the continuity of $(t, \mu) \mapsto \Ph^0_{t, \mu}$ in \eqref{eq:continuity_Ph} {and applying the two inequalities just above to $\mu'=\mu'_{\eps}:=\mu+\eps(\mu''-\mu)$ with $\eps\downarrow 0$}, this is enough to prove  \eqref{eq:DV_DJ}.
		Taking derivatives on both sides in \eqref{eq:DV_DJ},
		one has
		$$
			D_m V (t, \mu, x)
			~=~
			D_m \Jb (t, \mu, x, \Ph^0_{t,\mu}).
		$$
		Finally, using \eqref{eq:continuity_Ph} together with the continuity of $D_m \bar J$, we deduce  that $V \in C^{0,1} ([0,T] \x \Pc_2(\R^d))$.
	\end{example}

\end{document}